\documentclass[11pt, reqno, a4paper]{amsart}

\usepackage[dvipsnames]{xcolor}
\usepackage[pagebackref,backref=true,colorlinks=true, linkcolor=Sepia, citecolor=Sepia]{hyperref}
\usepackage{amsmath,amsthm,amssymb,fancyhdr,graphicx,bbm,cancel,mathrsfs,todonotes,xypic,pinlabel}
\usepackage{tikz}
\usepackage{extarrows,tipa}
\usetikzlibrary{matrix}
\usepackage[all]{xy}
\usepackage{mathtools}
\usepackage{bm}
\usepackage{verbatim}
\usepackage{microtype}
\usepackage{subcaption}

\usepackage[utf8]{inputenc}
\usepackage[T2A]{fontenc}
\usepackage[russian,english]{babel}

\DeclareFontFamily{T1}{cbgreek}{}
\DeclareFontShape{T1}{cbgreek}{m}{n}{<-6>  grmn0500 <6-7> grmn0600 <7-8> grmn0700 <8-9> grmn0800 <9-10> grmn0900 <10-12> grmn1000 <12-17> grmn1200 <17-> grmn1728}{}
\DeclareSymbolFont{quadratics}{T1}{cbgreek}{m}{n}
\DeclareMathSymbol{\qoppa}{\mathord}{quadratics}{19}
\DeclareMathSymbol{\Qoppa}{\mathord}{quadratics}{21}

\usepackage{amsthm}
\newtheorem{theorem}{Theorem}
\newtheorem{lemma}{Lemma}
\newtheorem{proposition}{Proposition}
\newtheorem{corollary}{Corollary}
\newtheorem{definition}{Definition}

\newtheorem{remark}{Remark}
\def\b{\ensuremath\boldsymbol}

\usepackage{mathtools} %--> for \Vert (norm symbol)

\usepackage{enumerate,pdfpages,stmaryrd} 
\usepackage[margin=1.2in,footskip=.5in,marginparwidth=1in]{geometry}

\usepackage{tikz}
\usepackage{dsfont}
\usetikzlibrary{matrix}

\usepackage{textcase}

\begin{document}

\title{{\large\NoCaseChange{Similarity Algebra: A Framework for Approximate Algebraic \\and Lie Structures with Collapse to Classical Algebra}}}

\author{\NoCaseChange{Benyamin Ghojogh, Golbahar Amanpour\\
\\Waterloo, ON, Canada\\
The authors contributed equally to this work.\\
Correspondence goes to {\sc bghojogh@uwaterloo.ca}}}

\maketitle

\begin{abstract}
Classical algebraic structures require exact satisfaction of their defining axioms. We propose similarity algebra, a framework extending algebraic and Lie structures to settings where operations satisfy quantitative bounds up to a tolerance $\varepsilon$. Instead of strict associativity, inverses, or distributivity, we study families of operations controlled by explicit $\varepsilon$-estimates and analyze their behavior under limit collapse.

Under uniform error control and $C^1_{\mathrm{loc}}$ convergence of the structure maps, we prove a general collapse theorem showing that similarity structures converge to classical algebraic objects, as $\varepsilon \rightarrow 0$. We develop a hierarchy of approximate structures, including similarity groups, rings, fields, vector spaces, and Lie groups, formalized through axioms satisfied within metric distance $\varepsilon$. We further define a category of similarity algebras governing morphisms between approximate systems. Moreover, we clarify the relationship between similarity algebra and fuzzy algebra, showing that the former generalizes the latter. The proposed similarity algebra can be useful to model real-world phenomena where operations or relations are inherently approximate.
\end{abstract}

{\footnotesize \textsc{Keywords---} similarity algebra, similarity groups, similarity rings, similarity fields, similarity modules, similarity vector spaces, similarity Lie algebras, similarity Lie groups, category of similarity algebra, fuzzy algebra}

\section{Introduction}

Classical algebraic structures, such as groups, rings, fields, modules, vector spaces, and Lie algebras, are fundamental tools in mathematics. However, in many real-world applications, operations are often approximate rather than exact. For example, in machine learning, similarity measures or kernel functions frequently replace exact arithmetic \cite{scholkopf2018learning} (e.g., see \cite{amanpour2026wittgenstein}); in data science, aggregating noisy or approximate similarity scores requires flexible algebraic reasoning \cite{indyk1998approximate}; in network theory, connectivity and adjacency operations may only hold approximately \cite{barabasi2013network}; and in other areas of mathematics, such as functional analysis or metric-based algebraic systems, classical axioms can be too rigid to model practical phenomena \cite{hyers1941stability,rassias1978stability,gromov2007metric}.

To address this, we introduce \textit{Similarity Algebra}, a framework for approximate algebraic and Lie structures. This framework formalizes approximate associativity, inverses, distributivity, and Lie brackets, while providing collapse theorems that ensure convergence to classical structures as approximation errors vanish. By doing so, similarity algebra enables mathematically rigorous reasoning in settings where operations are inherently approximate, bridging the gap between classical algebraic theory and practical computation or modeling.

% The framework is particularly relevant in applications such as similarity-based machine learning, clustering, kernel methods, approximate computation in networks, and generalized algebraic modeling, and it lays the groundwork for exploring new theoretical and applied extensions in both mathematics and computational sciences.

% \subsection*{Related Work}

The formalization of approximate algebraic structures draws from several distinct mathematical traditions, most notably the stability theory of functional equations. The \textit{Hyers-Ulam-Rassias stability} framework, initiated by Hyers \cite{hyers1941stability} and significantly generalized by Rassias \cite{rassias1978stability}, examines whether a function that ``almost'' satisfies an algebraic identity—such as the Cauchy functional equation—is necessarily close to an exact solution. This perspective provides the historical and logical foundation for our $\varepsilon$-bounded axioms, suggesting that approximate algebraic behavior is not merely noise but a structured deviation from classical norms.

Furthering this metric-based approach, Gromov \cite{gromov2007metric} introduced profound insights into metric structures that transcend rigid classical algebra, demonstrating how algebraic and geometric properties can be preserved or transformed under limit conditions. In the context of operator theory, Pisier \cite{pisier1998similarity} explored the \textit{similarity degree} of operator algebras, quantifying the distance between bounded homomorphisms and more rigid *-homomorphisms. This line of inquiry is complemented by the work of Arora et al. \cite{adali1998multi}, which addresses the computational complexity of finding approximate solutions to algebraic problems, highlighting the practical necessity of a formal similarity framework.

Our work also intersects with the modern study of additive combinatorics. Tao \cite{tao2008product} formalized the notion of \textit{approximate groups}, which are sets that are ``almost'' closed under multiplication, where the product of elements stays within a controlled expansion of the original set. While approximate groups focus largely on set-theoretic growth, our similarity algebra extends these concepts to a full hierarchy of structures---including rings, modules, and Lie algebras---unified by a metric convergence that ensures a rigorous collapse to classical algebra as the tolerance parameter vanishes.

It is also important to distinguish similarity algebra from the well-established field of \textit{fuzzy algebra} \cite{mordeson2001fuzzy,rosenfeld1971fuzzy}. While fuzzy algebra uses membership functions to define the degree to which an element belongs to a set or satisfies a property, similarity algebra relies on a metric $d$ and a tolerance parameter $\varepsilon$ to measure the approximate satisfaction of axioms. A key difference lies in the behavior of these systems in the limit: while fuzzy structures represent a permanent relaxation of membership, similarity algebra is specifically designed with \textit{collapse theorems}. These theorems ensure that as the approximation error $\varepsilon$ vanishes, the similarity structures, ranging from groups to Lie algebras, mathematically converge to their exact classical counterparts, recovering the rigid axioms of standard algebra.

The remainder of this paper is organized as follows. Section \ref{section_similarity_algebra} introduces the fundamental definition of a \textit{similarity algebra} and develops a hierarchy of approximate structures, including similarity semigroups, monoids, groups, rings, fields, and vector spaces. Section \ref{section_similarity_lie_algebra} extends this framework to similarity Lie algebras and similarity Lie groups, defining the approximate Lie bracket and smooth manifold requirements. Section \ref{section_collapse} provides the core theoretical contribution of this work, i.e., the collapse theorems and the limit principle for approximate equality. In Section \ref{section_practical_examples}, we provide two practical examples for similarity algebra; one for similarity field and one for similarity perturbed matrix Lie groups. Section \ref{section_category} introduces the \textit{category of similarity algebra}, exploring the morphisms and structural mappings between approximate systems. Section \ref{section_fuzzy} offers a rigorous comparison between the proposed similarity framework and fuzzy algebra, highlighting the unique convergence properties of our model. Finally, Section \ref{section_conclusion} concludes the paper with a discussion on the implications of this framework for machine learning and computational modeling.

\section{Similarity Algebra and Its Structures}\label{section_similarity_algebra}

In this section, we establish the formal foundation of similarity algebra by introducing a hierarchy of approximate algebraic structures. We begin with the most basic building blocks---similarity semigroups and monoids---and progressively construct more complex systems, including similarity groups, rings, fields, and vector spaces. Unlike classical algebra, which assumes absolute precision in its operations, our framework acknowledges a tolerance $\varepsilon$ within a metric space $(A, d)$. By redefining core axioms such as associativity, identity, and distributivity as $\varepsilon$-bounded constraints, we create a flexible yet mathematically rigorous environment. This hierarchical approach allows us to generalize the concept of ``approximate equality'' across diverse algebraic domains, ensuring that each structure maintains a well-defined relationship with its classical counterpart through a shared metric foundation.

\begin{definition}[Similarity Algebra]
A similarity algebra is a tuple:
\begin{align}
\mathcal{A}_\varepsilon := (A, \{\circ_i^\varepsilon\}_{i \in I}, d, \varepsilon), 
\end{align}
where $A$ is a nonempty set, each $\circ_i^\varepsilon: A^{n_i} \rightarrow A$ is an $n_i$-ary operation, $I$ is the index set of operators, $d$ is a metric on $A$, and $\varepsilon$ is a small non-negative tolerance (approximation error):
\begin{align}
\varepsilon \gtrsim 0,
\end{align}
and each operation is uniformly Lipschitz: 
\begin{align}
d(\circ_i^\varepsilon(\b{x}), \circ_i^\varepsilon(\b{y})) \leq L_i \sum_j d(x_j, y_j), \quad \forall x_j, y_j \in A,
\end{align}
in which $\b{x}$ and $\b{y}$ denote two vectors or sets with $x_j$ and $y_j$ being their $j$-th element, respectively, and $L_i$ is the Lipschitz constant for the operation $\circ_i^\varepsilon$.
\end{definition}

\begin{remark}
The metric or similarity function $d$ controls the “approximate satisfaction” of axioms in classical algebra. Assume $f_\varepsilon(x)$ and $g_\varepsilon(x)$ are approximate functions of $x$ with tolerance $\varepsilon$. Then:
\begin{align}\label{equation_d_f_g}
f_\varepsilon(x) \approx_\varepsilon g_\varepsilon(x) \quad \iff \quad d\big(f_\varepsilon(x), g_\varepsilon(x)\big) \leq \varepsilon.
\end{align}
The distance vanishes to zero when $\varepsilon \rightarrow 0$:
\begin{align}
\lim_{\varepsilon \rightarrow 0} d\big(f_\varepsilon(x), g_\varepsilon(x)\big) = 0 \quad\implies\quad f(x) = g(x).
\end{align}
As a special case of Eq. (\ref{equation_d_f_g}), we have:
\begin{align}\label{equation_d_x_y}
x \approx_\varepsilon y \quad \iff \quad d\big(x, y\big) \leq \varepsilon, \quad \forall x,y \in A.
\end{align}
\end{remark}

% As will be discussed in Section \ref{section_collapse}, the classical algebraic laws are satisfied up to similarity with tolerance $\epsilon$:
% \begin{align}
% \text{similarity algebraic law}(x_1, x_2, \dots, x_n) \approx_\varepsilon \text{classical algebraic law}(x_1, x_2, \dots, x_n).
% \end{align}
% This will be clarified more rigorously later in this paper.

\begin{definition}[Similarity Semigroup]
A similarity semigroup is a tuple:
\begin{align}
\mathcal{G}_\varepsilon := (A, *_\varepsilon, d, \varepsilon), 
\end{align}
where $*_\varepsilon: A \times A \rightarrow A$ is an approximate operator:
\begin{align}
\lim_{\varepsilon \rightarrow 0} *_\varepsilon = *,
\end{align}
and:
\begin{itemize}
\item there is approximate closure:
\begin{align}
x *_\varepsilon y \in_{\varepsilon} A, \quad \forall x, y \in A,
\end{align}
meaning that:
\begin{align}
\forall x, y \in A, \quad \exists z \in A, \quad \text{such that} \quad d(z, x *_\varepsilon y) \in A, 
\end{align}
\item there is approximate associativity:
\begin{align}\label{equation_approximate_associativity_multiplication}
d\big( (x *_\varepsilon y) *_\varepsilon z,\, x *_\varepsilon (y *_\varepsilon z) \big) \leq \varepsilon, \quad \forall x,y,z \in A,
\end{align}
\item there is stability:
\begin{align}
d(x *_\varepsilon y, x' *_\varepsilon y') \leq L\big(d(x, x') + d(y, y')\big), \quad \forall x,y,x',y' \in A,
\end{align}
in which $L$ is the Lipschitz constant. 
\end{itemize}
\end{definition}

\begin{definition}[Similarity Monoid]
A similarity monoid is a tuple:
\begin{align}
\mathcal{M}_\varepsilon := (A, *_\varepsilon, e_\varepsilon, d, \varepsilon),
\end{align}
which is a similarity semigroup together with an approximate identity element $e_\varepsilon \in A$ such that:
\begin{align}
\lim_{\varepsilon \rightarrow 0} e_\varepsilon = e,
\end{align}
and there exists approximate identity such that:
\begin{align}
& d(x *_\varepsilon e_\varepsilon, x) \leq \varepsilon, \quad \forall x \in A, \label{equation_approximate_identity_1} \\
& d(e_\varepsilon *_\varepsilon x, x) \leq \varepsilon, \quad \forall x \in A. \label{equation_approximate_identity_2}
\end{align}
\end{definition}

\begin{definition}[Similarity Abelian/Commutative Monoid]
A similarity Abelian (or commutative) monoid is a similarity monoid $(A, *_\varepsilon, e_\varepsilon, d, \varepsilon)$ such that:
\begin{itemize}
\item there is approximate commutativity:
\begin{align}\label{equation_approximate_commutativity_multplication}
d\big( (x *_\varepsilon y),\, (y *_\varepsilon x) \big) \leq \varepsilon, \quad \forall x,y \in A.
\end{align}
\end{itemize}
\end{definition}

\begin{definition}[Similarity Group]
A similarity group is a tuple:
\begin{align}
\mathcal{G}_\varepsilon := (A, *_\varepsilon, e_\varepsilon, (\cdot)_\varepsilon^{-1}, d, \varepsilon),
\end{align}
which is a similarity monoid satisfying approximate associativity (\ref{equation_approximate_associativity_multiplication}), approximate identity (\ref{equation_approximate_identity_1}), (\ref{equation_approximate_identity_2}), and additional features:
\begin{itemize}
\item each $x \in A$ has an approximate inverse $x_\varepsilon^{-1} \in A$ satisfying:
\begin{align}
& \lim_{\varepsilon \rightarrow 0} (x_\varepsilon)^{-1} = x^{-1}, \\
& d(x *_\varepsilon x_\varepsilon^{-1}, e_\varepsilon) \leq \varepsilon, \label{equation_approximate_multiplicative_inverse_1} \\
& d(x_\varepsilon^{-1} *_\varepsilon x, e_\varepsilon) \leq \varepsilon, \label{equation_approximate_multiplicative_inverse_2}
\end{align} 
\item and there is inverse stability:
\begin{align}
& d(x_\varepsilon^{-1}, y_\varepsilon^{-1}) \leq C d(x, y),
\end{align}
where $C$ is the Lipschitz constant. 
\end{itemize}
\end{definition}

\begin{definition}[Similarity Abelian/Commutative Group]
A similarity Abelian (or commutative) group is a similarity group, which also satisfies approximate commutativity of multiplication (\ref{equation_approximate_commutativity_multplication}).
\end{definition}

\begin{definition}[Similarity Semiring]
A similarity semiring is a tuple:
\begin{align}
\mathcal{R}_\varepsilon := (A, +_\varepsilon, *_\varepsilon, d, \varepsilon),
\end{align}
with two operators, approximate addition $+_\varepsilon$ and approximate multiplication $*_\varepsilon$, such that $(A, +_\varepsilon)$ is a similarity Abelian monoid under approximate addition, having:
\begin{itemize}
\item approximate commutativity:
\begin{align}\label{equation_semiring_approximate_commutativity_addition}
d\big( (x +_\varepsilon y),\, (y +_\varepsilon x) \big) \leq \varepsilon, \quad \forall x,y \in A,
\end{align}
\item approximate associativity:
\begin{align}\label{equation_semiring_approximate_associativity_addition}
d\big( (x +_\varepsilon y) +_\varepsilon z,\, x +_\varepsilon (y +_\varepsilon z) \big) \leq \varepsilon, \quad \forall x,y,z \in A,
\end{align}
\item there exists approximate addition identity $0_\varepsilon \in A$ such that:
\begin{align}
& d(x +_\varepsilon 0_\varepsilon, x) \leq \varepsilon, \quad \forall x \in A, \label{equation_semiring_addition_identity_1} \\
& d(0_\varepsilon +_\varepsilon x, x) \leq \varepsilon, \quad \forall x \in A, \label{equation_semiring_addition_identity_2}
\end{align}
\item there exists approximate commutativity of addition:
\begin{align}\label{equation_approximate_commutativity_addition}
d\big( (x +_\varepsilon y),\, (y +_\varepsilon x) \big) \leq \varepsilon, \quad \forall x,y \in A.
\end{align}
\end{itemize}
and $(A, *_\varepsilon)$ is a similarity monoid under approximate multiplication, having:
\begin{itemize}
\item approximate associativity:
\begin{align}\label{equation_semiring_approximate_associativity_multiplication}
d\big( (x *_\varepsilon y) *_\varepsilon z,\, x *_\varepsilon (y *_\varepsilon z) \big) \leq \varepsilon, \quad \forall x,y,z \in A,
\end{align}
\item there exists approximate multiplication identity $e_\varepsilon$ such that:
\begin{align}
& d(x *_\varepsilon e_\varepsilon, x) \leq \varepsilon, \quad \forall x \in A, \label{equation_semiring_multiplication_identity_1} \\
& d(e_\varepsilon *_\varepsilon x, x) \leq \varepsilon, \quad \forall x \in A. \label{equation_semiring_multiplication_identity_2}
\end{align}
\end{itemize}
and there is approximate distributivity of approximate multiplication over approximate addition:
\begin{align}\label{equation_semiring_distributivity}
d\big( x *_\varepsilon (y +_\varepsilon z),\, (x *_\varepsilon y) +_\varepsilon (x *_\varepsilon z) \big) \leq \varepsilon, \quad \forall x,y,z \in A.
\end{align}
\end{definition}

\begin{definition}[Similarity Ring]
A similarity ring is a tuple:
\begin{align}
\mathcal{R}_\varepsilon := (A, +_\varepsilon, *_\varepsilon, -_\varepsilon, 0_\varepsilon, d, \varepsilon),
\end{align}
which is a similarity semiring where:
\begin{itemize}
\item $(A, +_\varepsilon)$ is a similarity Abelian group under approximate addition, satisfying Eqs. (\ref{equation_semiring_approximate_commutativity_addition}), (\ref{equation_semiring_approximate_associativity_addition}), (\ref{equation_semiring_addition_identity_1}), (\ref{equation_semiring_addition_identity_2}), and (\ref{equation_approximate_commutativity_addition}), and there is approximate additive inverse $(-x)_\varepsilon \in A$ for each $x \in A$:
\begin{align}
&d\big(x +_\varepsilon (-x)_\varepsilon, 0_\varepsilon \big) \leq \varepsilon, \label{equation_approximate_additive_inverse_1} \\
&d\big((-x)_\varepsilon +_\varepsilon x, 0_\varepsilon \big) \leq \varepsilon, \label{equation_approximate_additive_inverse_2}
\end{align}
where $0_\varepsilon$ is the approximate zero such that:
\begin{align}
&\lim_{\varepsilon \rightarrow 0} 0_\varepsilon = 0, \\
&\lim_{\varepsilon \rightarrow 0} (-x)_\varepsilon = -x.
\end{align}
\item $(A, *_\varepsilon)$ is a similarity monoid under approximate multiplication, satisfying Eqs. (\ref{equation_semiring_approximate_associativity_multiplication}), (\ref{equation_semiring_multiplication_identity_1}), and (\ref{equation_semiring_multiplication_identity_2}). 
\item there is approximate distributivity of approximate multiplication over approximate addition, as in Eq. (\ref{equation_semiring_distributivity}).
\end{itemize}
\end{definition}

\begin{definition}[Similarity Field]
A similarity field is a tuple:
\begin{align}
\mathcal{F}_\varepsilon := (A, +_\varepsilon, *_\varepsilon, -_\varepsilon, 0_\varepsilon, e_\varepsilon, (\cdot)_\varepsilon^{-1}, d, \varepsilon),
\end{align}
which is a similarity ring where:
\begin{itemize}
\item $(A, +_\varepsilon)$ is a similarity Abelian group, satisfying Eqs. (\ref{equation_semiring_approximate_commutativity_addition}), (\ref{equation_semiring_approximate_associativity_addition}), (\ref{equation_semiring_addition_identity_1}), (\ref{equation_semiring_addition_identity_2}), (\ref{equation_approximate_additive_inverse_1}), and (\ref{equation_approximate_additive_inverse_2}).
\item $(A \backslash \{0_\varepsilon\}, *_\varepsilon)$ is a similarity Abelian group satisfying Eqs. (\ref{equation_semiring_approximate_associativity_multiplication}), (\ref{equation_semiring_multiplication_identity_1}), and (\ref{equation_semiring_multiplication_identity_2}), and there is approximate multiplicative inverse (\ref{equation_approximate_multiplicative_inverse_1}) and (\ref{equation_approximate_multiplicative_inverse_2}).
\item there is approximate distributivity of approximate multiplication over approximate addition, as in Eq. (\ref{equation_semiring_distributivity}).
\end{itemize}
\end{definition}

\begin{definition}[Similarity Module]
Let $\mathcal{R}_\varepsilon = (A, +_\varepsilon, *_\varepsilon, d, \varepsilon)$ be a similarity ring. A similarity $\mathcal{R}_\varepsilon$-module is a module over the similarity ring $\mathcal{R}_\varepsilon$, defined as a tuple:
\begin{align}
\mathcal{M}_\varepsilon := (M, +_\varepsilon, \boldsymbol{\cdot}_\varepsilon, d),
\end{align}
such that:
\begin{itemize}
\item $(M, +_\varepsilon)$ is a Similarity Abelian group, 
\item $\boldsymbol{\cdot}_\varepsilon: A \times M \rightarrow M$,
\item there is approximate scalar distributivity (approximate module axioms):
\begin{align}
d\big( r \boldsymbol{\cdot}_\varepsilon (x +_\varepsilon y), (r \boldsymbol{\cdot}_\varepsilon x) +_\varepsilon (r \boldsymbol{\cdot}_\varepsilon y) \big) \leq \varepsilon,
\end{align}
where $r$ is a scalar. 
\end{itemize}
\end{definition}

\begin{definition}[Similarity Vector Space]
A similarity vector space $\mathcal{V}_\varepsilon = (V, +_\varepsilon, \boldsymbol{\cdot}_\varepsilon, d)$ is a similarity module over a similarity field $\mathcal{F}_\varepsilon$.
\end{definition}

\section{Similarity Lie Algebra and Similarity Lie Group}\label{section_similarity_lie_algebra}

Building upon the discrete similarity structures established in the previous section, we now extend the framework to the continuous domain by defining similarity Lie groups and similarity Lie algebras. While classical Lie theory relies on the rigid differential structure of smooth manifolds and the exact satisfaction of the Jacobi identity, many physical and computational systems exhibit symmetries that are only approximately preserved. In this section, we formalize these phenomena by introducing the similarity Lie bracket, which satisfies the antisymmetry and Jacobi identities within an $\varepsilon$ tolerance. We further explore the geometric implications of these structures, defining how the local properties of a similarity Lie group---governed by approximate group operations on a manifold---relate to its corresponding similarity Lie algebra. This extension provides the necessary mathematical language to treat approximate continuous symmetries with the same degree of foundational rigor applied to basic similarity groups and rings.

\begin{definition}[Similarity Lie Algebra]
A similarity Lie algebra is a vector space $\mathfrak{g}$ over a similarity field $\mathcal{F}_\varepsilon$ is a tuple:
\begin{align}
\mathcal{L}_\varepsilon := (\mathfrak{g}_\varepsilon, +_\varepsilon, [\cdot, \cdot]_\varepsilon, 0_\varepsilon, -_\varepsilon, d),
\end{align}
where $(\mathfrak{g}_\varepsilon, +_\varepsilon, 0_\varepsilon, -_\varepsilon)$ is a similarity vector space $\mathfrak{g}_\varepsilon$ over approximate addition $+_\varepsilon$, approximate additive inverse $-_\varepsilon$, and scalar multiplication, with the metric $d$. 
The $[\cdot, \cdot]_\varepsilon: \mathfrak{g}_\varepsilon \times \mathfrak{g}_\varepsilon \rightarrow \mathfrak{g}_\varepsilon$ is a similarity Lie bracket satisfying approximate axioms:
\begin{itemize}
\item Approximate bilinearity:
\begin{align}
& d([ax +_\varepsilon by, z]_\varepsilon, a[x, z]_\varepsilon + b[y, z]_\varepsilon) \leq \varepsilon, \quad \forall x,y,z \in \mathfrak{g}_\varepsilon, \forall a,b \in \mathbb{R}, \\
& d([z, ax +_\varepsilon by]_\varepsilon, a[z, x]_\varepsilon + b[z, y]_\varepsilon) \leq \varepsilon, \quad \forall x,y,z \in \mathfrak{g}_\varepsilon, \forall a,b \in \mathbb{R},
\end{align}
\item Approximate antisymmetry:
\begin{align}
& d([x, y]_\varepsilon - [y, x]_\varepsilon) \leq \varepsilon, \quad \forall x,y \in \mathfrak{g}_\varepsilon,
\end{align}
\item Approximate Jacobi identity:
\begin{align}
& d\big( [x, [y, z]_\varepsilon]_\varepsilon +_\varepsilon [y, [z, x]_\varepsilon]_\varepsilon +_\varepsilon [z, [x, y]_\varepsilon]_\varepsilon, 0_\varepsilon \big) \leq \varepsilon, \quad \forall x,y,z \in \mathfrak{g}_\varepsilon.
\end{align}
\end{itemize}
\end{definition}

\begin{definition}[Similarity Lie Group]
A similarity Lie group is a tuple:
\begin{align}
\mathfrak{L}_\varepsilon := (G_\varepsilon, \boldsymbol{\cdot}_\varepsilon, e_\varepsilon, (\cdot)_\varepsilon^{-1}, d),
\end{align}
where $G_\varepsilon$ is a smooth manifold with metric $d$ and $\boldsymbol{\cdot}_\varepsilon: G_\varepsilon \times G_\varepsilon \rightarrow G_\varepsilon$ is approximate group multiplication satisfying:
\begin{itemize}
\item Approximate associativity:
\begin{align}
d((g \boldsymbol{\cdot}_\varepsilon h) \boldsymbol{\cdot}_\varepsilon k, g \boldsymbol{\cdot}_\varepsilon (h \boldsymbol{\cdot}_\varepsilon k)) \leq \varepsilon, \quad g,h,k \in G_\varepsilon, 
\end{align}
\item Approximate identity:
\begin{align}
&d((g \boldsymbol{\cdot}_\varepsilon e_\varepsilon), g) \leq \varepsilon, \quad \forall g \in G_\varepsilon, \\
&d(( e_\varepsilon \boldsymbol{\cdot}_\varepsilon g), g) \leq \varepsilon, \quad \forall g \in G_\varepsilon, 
\end{align}
\item Approximate inverse:
\begin{align}
&d((g \boldsymbol{\cdot}_\varepsilon g_\varepsilon^{-1}), e_\varepsilon) \leq \varepsilon, \quad \forall g \in G_\varepsilon, \\
&d(( g_\varepsilon^{-1} \boldsymbol{\cdot}_\varepsilon g), e_\varepsilon) \leq \varepsilon, \quad \forall g \in G_\varepsilon.
\end{align}
\end{itemize}
% and all operations are smooth in the approximate sense, i.e., charts satisfy Lipschitz-type bounds with error $\varepsilon$.
Additionally, for each $\varepsilon \ge 0$, $G_\varepsilon$ is assumed to be a $C^1$-smooth manifold. 
The multiplication map:
\begin{align}
m_\varepsilon : G_\varepsilon \times G_\varepsilon \to G_\varepsilon,
\end{align}
and inversion map:
\begin{align}
\iota_\varepsilon : G_\varepsilon \to G_\varepsilon,
\end{align}
are $C^1$-smooth.
We assume that the structure maps converge in the $C^1$ topology uniformly on compact subsets as $\varepsilon \to 0$, i.e.,
\begin{align}
m_\varepsilon \to m, \quad \iota_\varepsilon \to \iota,
\end{align}
in $C^1_{\mathrm{loc}}$ (locally continuously differentiable).
\end{definition}

\section{Collapse Theorems}\label{section_collapse}

% The classical algebraic laws are satisfied up to similarity with tolerance $\epsilon$:
% \begin{align}
% \text{similarity algebraic law}(x_1, x_2, \dots, x_n) \approx_\varepsilon \text{classical algebraic law}(x_1, x_2, \dots, x_n).
% \end{align}
% This will be clarified more rigorously later in this paper.

% All collapse theorems are instances of a single limit principle: Metric-stable algebraic identities become exact in the zero-tolerance limit.

% The collapse theorem guarantees that as $\varepsilon \rightarrow 0$, all approximate operations become exact, recovering classical Lie structures.

The foundational premise of similarity algebra is that classical algebraic identities are satisfied up to a similarity tolerance $\epsilon$. Formally, for any $n$-ary operation, the relationship between the similarity and classical laws can be expressed as:
\begin{align}
&\mathcal{L}_{\epsilon}(x_1, x_2, \dots, x_n) \approx_\varepsilon \mathcal{L}_{0}(x_1, x_2, \dots, x_n), \quad \text{or} \\
&d\left(\mathcal{L}_{\epsilon}(x_1, \dots, x_n), \mathcal{L}_{0}(x_1, \dots, x_n)\right) \leq \varepsilon,
\end{align}
where $\mathcal{L}_{\epsilon}$ denotes the similarity algebraic law and $\mathcal{L}_{0}$ denotes its classical counterpart. This approximation is governed by the metric $d$ such that the distance between the two expressions is bounded by $\varepsilon$. 

The various collapse theorems presented in the following are specific instances of a unified limit principle: metric-stable algebraic identities become exact in the zero-tolerance limit. This principle guarantees that as the approximation error $\varepsilon \rightarrow 0$, the similarity structures---including the complex manifold properties of similarity Lie groups---undergo a mathematical collapse to their classical counterparts. This process ensures the recovery of rigid classical structures, confirming that similarity algebra is a consistent extension of classical algebraic theory.

\begin{lemma}[Limit of Approximate Equality]\label{lemma_limit_approximate_equality}
Let $(X, d)$ be a metric space and $\varepsilon \geq 0$. All operations are uniformly Lipschitz and uniformly convergent. If for all $\varepsilon \geq 0$, 
\begin{align}\label{equation_d_f_g_epsilon}
d\big(f_\varepsilon(x), g_\varepsilon(x)\big) \leq \varepsilon \quad \text{or} \quad f_\varepsilon(x) \approx_\varepsilon g_\varepsilon(x),
\end{align}
and
\begin{align}\label{equation_f_g_converge}
f_\varepsilon \rightarrow f, \quad g_\varepsilon \rightarrow g, 
\end{align}
uniformly, as $\varepsilon \rightarrow 0$, then:
\begin{align}\label{equation_f_g_equal}
f(x) = g(x), \quad \forall x.
\end{align}
\end{lemma}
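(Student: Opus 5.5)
The plan is a triangle-inequality argument followed by passage to the limit. Fix an arbitrary point $x$ in the common domain of the maps, and read the target space as $(X,d)$. For every $\varepsilon>0$ the triangle inequality gives
\begin{align*}
d\big(f(x), g(x)\big) \leq d\big(f(x), f_\varepsilon(x)\big) + d\big(f_\varepsilon(x), g_\varepsilon(x)\big) + d\big(g_\varepsilon(x), g(x)\big).
\end{align*}
The middle term is at most $\varepsilon$ by the hypothesis (\ref{equation_d_f_g_epsilon}). The two outer terms are bounded by $\sup_y d(f(y), f_\varepsilon(y))$ and $\sup_y d(g_\varepsilon(y), g(y))$, and both tend to $0$ by the uniform convergence in (\ref{equation_f_g_converge}).

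Next I let $\varepsilon \to 0$ along any sequence $\varepsilon_n \downarrow 0$. The left-hand side does not depend on $\varepsilon$, so it is bounded by the limit of the right-hand side, which is $0$. Hence $d(f(x),g(x)) = 0$. Since $d$ is a metric, not merely a pseudometric or similarity score, this forces $f(x)=g(x)$. Because $x$ was arbitrary, (\ref{equation_f_g_equal}) follows.

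There is no real obstacle; only the hypotheses need to be pinned down. Pointwise convergence $f_\varepsilon(x)\to f(x)$ and $g_\varepsilon(x)\to g(x)$ already suffices, so uniformity is stronger than needed. The uniform Lipschitz assumption is not used here; I would note that it matters only later, for composite laws where the $f_\varepsilon$ are built from the operations $\circ_i^\varepsilon$. The one point to state explicitly is that the bound (\ref{equation_d_f_g_epsilon}) must hold for a family of $\varepsilon$ accumulating at $0$, not just for one fixed tolerance.

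I would also add a remark on what happens if the bound is $C\varepsilon$ or any $\delta(\varepsilon)\to 0$ instead of $\varepsilon$. The same argument goes through unchanged. This robustness is what the later collapse theorems need, because there the error constants pick up Lipschitz factors $L_i$.
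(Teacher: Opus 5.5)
Your proposal is correct and is essentially the paper's proof: the same triangle inequality $d(f(x),g(x)) \le d(f(x),f_\varepsilon(x)) + \varepsilon + d(g_\varepsilon(x),g(x))$, followed by letting $\varepsilon \to 0$ and using that $d$ is a metric. Your bound on the outer terms by $\sup_y d(\cdot,\cdot)$ is the proper metric-space reading of the paper's $\lVert\cdot\rVert_\infty$ notation, and your remarks are accurate: pointwise convergence suffices, and the Lipschitz hypothesis is not used.
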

\begin{proof}
Using the triangle inequality twice, we have:
\begin{align*}
d\big(f(x), g(x)\big) &\leq d\big(f(x), f_\varepsilon(x)\big) + d\big(f_\varepsilon(x), g(x)\big) \\
&\leq d\big(f(x), f_\varepsilon(x)\big) + d\big(f_\varepsilon(x), g_\varepsilon(x)\big) + d\big(g_\varepsilon(x), g(x)\big)\\
&\overset{(\ref{equation_d_f_g_epsilon})}{\leq} \lVert f(x) - f_\varepsilon(x) \rVert_\infty + \varepsilon + \lVert g_\varepsilon(x) - g(x) \rVert_\infty.
\end{align*}
As $\varepsilon \rightarrow 0$, we have:
\begin{align*}
\lim_{\varepsilon \rightarrow 0} d\big(f(x), g(x)\big) &\overset{(\ref{equation_f_g_converge})}{\leq} \lVert f(x) - f(x) \rVert_\infty + 0 + \lVert g(x) - g(x) \rVert_\infty = 0.
\end{align*}
As $d\big(f(x), g(x)\big) \geq 0$ because of being a metric, thus $d\big(f(x), g(x)\big) = 0$, resulting in $f(x) = g(x)$, which is Eq. (\ref{equation_f_g_equal}). The limits and uniform convergence in metric spaces are justified according to \cite{bourbaki1989lie,rassias1978stability,hyers1941stability,forti1995hyers}.
\end{proof}

\begin{proposition}[Collapse of Similarity Algebra]
Let $\mathcal{A}_\varepsilon = (A, \{\circ_i^\varepsilon\}_{i \in I}, d, \varepsilon)$ be a similarity algebra such that:
\begin{itemize}
\item each $\circ_i^\varepsilon \rightarrow \circ_i$ uniformly on compact sets, and
\item all defining algebraic identities are satisfied up to error $\varepsilon$.
\end{itemize}
Then, as error goes to zero, i.e., $\varepsilon \rightarrow 0$, we have:
\begin{align}
\mathcal{A}_\varepsilon \rightarrow \mathcal{A}_0 = (A, \{\circ_i^\varepsilon\}_{i \in I}, d),
\end{align}
where $\mathcal{A}_0$ is a classical algebra satisfying all identities exactly. 
\end{proposition}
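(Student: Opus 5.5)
The plan is to reduce the proposition to Lemma \ref{lemma_limit_approximate_equality}, applied identity by identity. Every defining axiom of a similarity algebra has the form $d(\Phi^\varepsilon(\b{x}), \Psi^\varepsilon(\b{x})) \leq \varepsilon$. Here $\Phi^\varepsilon$ and $\Psi^\varepsilon$ are finite compositions (terms) built from the operations $\circ_i^\varepsilon$, the approximate constants (such as $e_\varepsilon$, $0_\varepsilon$) and the variables; for associativity, for instance, $\Phi^\varepsilon(x,y,z) = (x *_\varepsilon y) *_\varepsilon z$ and $\Psi^\varepsilon(x,y,z) = x *_\varepsilon (y *_\varepsilon z)$. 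Write $\Phi, \Psi$ for the same terms built from the limit operations $\circ_i$ and limit constants. Once I show $\Phi^\varepsilon \to \Phi$ and $\Psi^\varepsilon \to \Psi$ uniformly on compact subsets of $A^n$, the lemma, applied pointwise with $f_\varepsilon = \Phi^\varepsilon$ and $g_\varepsilon = \Psi^\varepsilon$, gives $\Phi = \Psi$. This is exactly the classical identity for $\mathcal{A}_0$. Since $I$ and the list of axioms are fixed, doing this for each axiom yields the claim. I read $\mathcal{A}_0$ as $(A, \{\circ_i\}_{i\in I}, d)$ with the limit operations; the superscript $\varepsilon$ in the displayed statement appears to be a typo.

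The key step is convergence of composite terms, which I would prove by induction on the depth of the term. Variables and constants converge trivially (constants by the assumed limits $e_\varepsilon \to e$, and so on). For the inductive step, take $T^\varepsilon = \circ_i^\varepsilon(T_1^\varepsilon, \dots, T_{n_i}^\varepsilon)$ and split the error with the triangle inequality:
\begin{align*}
d\big(T^\varepsilon(\b{x}), T(\b{x})\big) \leq d\big(\circ_i^\varepsilon(T_j^\varepsilon(\b{x}))_j, \circ_i^\varepsilon(T_j(\b{x}))_j\big) + d\big(\circ_i^\varepsilon(T_j(\b{x}))_j, \circ_i(T_j(\b{x}))_j\big).
\end{align*}
The uniform Lipschitz constant $L_i$, which does not depend on $\varepsilon$, bounds the first term by $L_i \sum_j d(T_j^\varepsilon(\b{x}), T_j(\b{x}))$, and this tends to zero uniformly on compacts by the induction hypothesis. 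The second term tends to zero by uniform convergence of $\circ_i^\varepsilon$ on compact sets. That requires the image of a compact set $K$ under the continuous limit term $T_j$ to be compact. Continuity holds because $\circ_i$ inherits the Lipschitz bound $L_i$ as a pointwise limit, so each $T_j$ is continuous and $T_j(K)$ is compact.

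For the inverse-type axioms, such as $d(x *_\varepsilon x_\varepsilon^{-1}, e_\varepsilon) \leq \varepsilon$, I treat inversion as a unary operation with the inverse-stability Lipschitz constant $C$ and with pointwise limit $x^{-1}$. The same induction then applies.

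I expect the main obstacle to be the mismatch between local uniform convergence, which is all the hypotheses give, and the uniform convergence that Lemma \ref{lemma_limit_approximate_equality} asks for. The remedy is to apply the lemma only at a fixed point $\b{x}$. Its proof uses nothing beyond $d(f_\varepsilon(\b{x}), f(\b{x})) \to 0$ at that point, and the singleton $\{\b{x}\}$ is compact, so convergence on compact sets suffices. Since equality at every point gives $\Phi = \Psi$ on all of $A^n$, this finishes the proof. A secondary issue is the field axioms, which are stated on $A \setminus \{0_\varepsilon\}$, a domain that moves with $\varepsilon$. For a fixed $x \neq 0$ we have $x \neq 0_\varepsilon$ once $\varepsilon$ is small, because $0_\varepsilon \to 0$. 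The pointwise argument therefore still gives the classical multiplicative-inverse identity on $A \setminus \{0\}$.
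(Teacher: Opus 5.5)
Your proposal is correct and follows the paper's route: write each defining axiom as $d(P_\varepsilon(x),Q_\varepsilon(x))\le\varepsilon$ and invoke Lemma~\ref{lemma_limit_approximate_equality} identity by identity. The paper's proof assumes without argument that $\circ_i^\varepsilon\to\circ_i$ forces the composite terms to converge. You supply the missing details: an induction on term depth using the $\varepsilon$-independent Lipschitz constants, the observation that the lemma only needs pointwise convergence, and the treatment of the moving domain $A\setminus\{0_\varepsilon\}$.
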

\begin{proof}
A similarity algebra consists of operations $\{\circ_i^\varepsilon\}_{i \in I}$ satisfying identities:
\begin{align*}
P(\circ_1^\varepsilon, \dots, \circ_n^\varepsilon)(x) \approx Q(\circ_1^\varepsilon, \dots, \circ_n^\varepsilon)(x).
\end{align*}
That is:
\begin{align*}
d\big(P_\varepsilon(x), Q_\varepsilon(x)\big) \leq \varepsilon.
\end{align*}
Assume uniform convergence:
\begin{align*}
\circ_i^\varepsilon \rightarrow \circ_i.
\end{align*}
Then by Lemma \ref{lemma_limit_approximate_equality}, as $\varepsilon \rightarrow 0$, we have:
\begin{align*}
P(x) = Q(x).
\end{align*}
Thus, all defining identities hold exactly in the limit.
\end{proof}

\begin{proposition}[Collapse of Similarity Semigroup]
Let $\mathcal{G}_\varepsilon = (A, *_\varepsilon, d, \varepsilon)$ be a similarity semigroup satisfying approximate associativity:
\begin{align*}
d\big( (x *_\varepsilon y) *_\varepsilon z,\, x *_\varepsilon (y *_\varepsilon z) \big) \leq \varepsilon, \quad \forall x,y,z \in A.
\end{align*}
If $*_\varepsilon \rightarrow *$ uniformly, then as $\varepsilon \rightarrow 0$, 
\begin{align}
(A, *_\varepsilon, d, \varepsilon) \rightarrow (A, *),
\end{align}
where $(A, *)$ is a classical semigroup.
\end{proposition}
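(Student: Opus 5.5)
The plan is to apply Lemma \ref{lemma_limit_approximate_equality} to the two sides of the associativity law. Fix $x,y,z \in A$ and define
\begin{align*}
f_\varepsilon(x,y,z) := (x *_\varepsilon y) *_\varepsilon z, \qquad g_\varepsilon(x,y,z) := x *_\varepsilon (y *_\varepsilon z),
\end{align*}
with candidate limits $f(x,y,z) := (x * y) * z$ and $g(x,y,z) := x * (y * z)$. The hypothesis of approximate associativity gives $d(f_\varepsilon, g_\varepsilon) \leq \varepsilon$ pointwise. This is exactly hypothesis (\ref{equation_d_f_g_epsilon}) of the lemma.

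The main step, and the only real obstacle, is to verify that $f_\varepsilon \to f$ and $g_\varepsilon \to g$ uniformly. The lemma assumes this, but the proposition only assumes $*_\varepsilon \to *$ uniformly. Here I will use the uniform Lipschitz stability constant $L$ from the definition of a similarity semigroup. The triangle inequality gives
\begin{align*}
d\big((x *_\varepsilon y) *_\varepsilon z,\, (x * y) * z\big) &\leq d\big((x *_\varepsilon y) *_\varepsilon z,\, (x * y) *_\varepsilon z\big) + d\big((x * y) *_\varepsilon z,\, (x * y) * z\big) \\
&\leq L\, d(x *_\varepsilon y,\, x * y) + \sup d(*_\varepsilon, *) \leq (L+1)\sup d(*_\varepsilon, *).
\end{align*}
This tends to $0$ uniformly in $(x,y,z)$. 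The same estimate applied to $x *_\varepsilon (y *_\varepsilon z)$, using Lipschitz dependence on the second argument, handles $g_\varepsilon$. Note that $L$ must not depend on $\varepsilon$, which is how I read ``uniformly Lipschitz'' in the definition of a similarity algebra. If only convergence on compact sets were available, I would instead restrict to a compact set containing $x,y,z$ and their products. In that case I would also check that $*$ is continuous, which follows as a uniform limit of $L$-Lipschitz maps, so that images of compact sets remain compact.

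With both convergences established, Lemma \ref{lemma_limit_approximate_equality} gives $f = g$, that is, $(x*y)*z = x*(y*z)$ for all $x,y,z \in A$. Closure is automatic, since $*: A\times A \to A$ is a pointwise limit of maps into $A$, provided $A$ is complete or, more simply, since the proposition's hypothesis already posits $*$ as a binary operation on $A$. Hence $(A,*)$ is a classical semigroup. The limiting operation $*$ also inherits the Lipschitz bound with constant $L$ by passing to the limit in the stability inequality, so the convergence $(A,*_\varepsilon,d,\varepsilon) \to (A,*)$ is meant as uniform convergence of the operation together with exact satisfaction of the axiom.
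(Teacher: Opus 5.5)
Your proposal is correct and follows the paper's proof: it uses the same $f_\varepsilon$ and $g_\varepsilon$ and then applies Lemma~\ref{lemma_limit_approximate_equality}. The one addition is your explicit estimate $d\big((x *_\varepsilon y) *_\varepsilon z,\, (x*y)*z\big) \leq (L+1)\sup_{u,v} d(u *_\varepsilon v,\, u * v)$, together with its analogue for $g_\varepsilon$; this supplies the uniform convergence $f_\varepsilon \to f$, $g_\varepsilon \to g$ that the lemma requires, which the paper simply takes for granted from $*_\varepsilon \to *$.
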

\begin{proof}
We define:
\begin{align*}
&f_\varepsilon(x,y,z) := (x *_\varepsilon y) *_\varepsilon z, \\
&g_\varepsilon(x,y,z) := x *_\varepsilon (y *_\varepsilon z).
\end{align*}
According to the approximate associativity:
\begin{align*}
d\big( (x *_\varepsilon y) *_\varepsilon z,\, x *_\varepsilon (y *_\varepsilon z) \big) \leq \varepsilon,
\end{align*}
we have:
\begin{align*}
d(f_\varepsilon(x,y,z), g_\varepsilon(x,y,z)) \leq \varepsilon.
\end{align*}
As $\varepsilon \rightarrow 0$, we have $*_\varepsilon \rightarrow *$ uniformly.
Then by Lemma \ref{lemma_limit_approximate_equality}, as $\varepsilon \rightarrow 0$, we have:
\begin{align*}
(x * y) * z = x * (y * z),
\end{align*}
which is the exact associativity. Hence, $(A, *)$ is a classical semigroup.
\end{proof}

\begin{proposition}[Collapse of Similarity Monoid]
Let $\mathcal{M}_\varepsilon = (A, *_\varepsilon, e_\varepsilon, d, \varepsilon)$ be a similarity monoid, with an approximate identity element $e_\varepsilon$, such that:
\begin{align*}
& d(x *_\varepsilon e_\varepsilon, x) \leq \varepsilon, \quad \forall x \in A,  \\
& d(e_\varepsilon *_\varepsilon x, x) \leq \varepsilon, \quad \forall x \in A. 
\end{align*}
If $e_\varepsilon \rightarrow e$ and $*_\varepsilon \rightarrow *$ uniformly, then as $\varepsilon \rightarrow 0$, 
\begin{align}
(A, *_\varepsilon, e_\varepsilon, d, \varepsilon) \rightarrow (A, *, e),
\end{align}
where $(A, *, e)$ is a classical monoid with identity element $e$.
\end{proposition}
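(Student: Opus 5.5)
The plan follows the same pattern as the collapse of similarity semigroup proposition above. Since a similarity monoid is in particular a similarity semigroup, that proposition applied to $(A, *_\varepsilon, d, \varepsilon)$ already gives exact associativity of the limit operation $*$. Only the identity axiom remains, and I will handle it with Lemma \ref{lemma_limit_approximate_equality}.

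For the right identity, I will set $f_\varepsilon(x) := x *_\varepsilon e_\varepsilon$ and $g_\varepsilon(x) := x$, with $g_\varepsilon$ constant in $\varepsilon$, so that $g = \mathrm{id}$. The hypothesis (\ref{equation_approximate_identity_1}) gives $d(f_\varepsilon(x), g_\varepsilon(x)) \leq \varepsilon$ for all $x$. The left identity is symmetric, with $f_\varepsilon(x) := e_\varepsilon *_\varepsilon x$ and (\ref{equation_approximate_identity_2}).

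The main step is to show $f_\varepsilon \to f$ with $f(x) := x * e$, so that Lemma \ref{lemma_limit_approximate_equality} can be invoked. Here two limits act at once, since both the operation and the identity element depend on $\varepsilon$. I will split with the triangle inequality:
\begin{align*}
d(x *_\varepsilon e_\varepsilon, x * e) \leq d(x *_\varepsilon e_\varepsilon, x *_\varepsilon e) + d(x *_\varepsilon e, x * e).
\end{align*}
The first term is at most $L\, d(e_\varepsilon, e)$ by the stability (uniform Lipschitz) axiom of the similarity semigroup. This is where it matters that $L$ is independent of $\varepsilon$: it makes the term tend to $0$ uniformly in $x$. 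The second term is at most $\sup_{u,v} d(u *_\varepsilon v, u * v)$, which tends to $0$ by uniform convergence $*_\varepsilon \to *$.

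Hence $f_\varepsilon \to f$ uniformly. Lemma \ref{lemma_limit_approximate_equality} then yields $x * e = x$, and likewise $e * x = x$, for all $x \in A$. Combined with associativity, $(A, *, e)$ is a classical monoid with identity $e$.

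The only delicate point is this joint convergence. If the Lipschitz constant were allowed to grow with $\varepsilon$, I would instead need $L_\varepsilon\, d(e_\varepsilon, e) \to 0$, or equicontinuity of the family $\{*_\varepsilon\}$. Under the paper's uniform-$L$ hypothesis, however, the argument goes through directly.
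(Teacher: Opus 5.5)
Your proposal is correct and follows the paper's own route: apply Lemma \ref{lemma_limit_approximate_equality} to $f_\varepsilon(x)=x *_\varepsilon e_\varepsilon$ against $x$ (and symmetrically to $e_\varepsilon *_\varepsilon x$) to obtain $x*e=e*x=x$. You go somewhat further than the paper, which leaves two points implicit. First, you justify $x *_\varepsilon e_\varepsilon \to x * e$ via the triangle inequality and the $\varepsilon$-independent Lipschitz constant. Second, you cite the semigroup collapse to get associativity.
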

\begin{proof}
We define:
\begin{align*}
f_\varepsilon(x) = x *_\varepsilon e_\varepsilon.
\end{align*}
According to approximate identity:
\begin{align*}
d(x *_\varepsilon e_\varepsilon, x) \leq \varepsilon,
\end{align*}
we have:
\begin{align*}
d(f_\varepsilon(x), x) \leq \varepsilon.
\end{align*}
As $\varepsilon \rightarrow 0$, we have $e_\varepsilon \rightarrow e$ and $*_\varepsilon \rightarrow *$ uniformly.
Then by Lemma \ref{lemma_limit_approximate_equality}, as $\varepsilon \rightarrow 0$, we have:
\begin{align*}
x * e = x,
\end{align*}
which is the exact identity element. Likewise, it can be proved for $d(e_\varepsilon *_\varepsilon x, x) \leq \varepsilon$ collapsing into $e * x = x$. Hence, $(A, *, e)$ is a classical monoid.
\end{proof}

\begin{proposition}[Collapse of Similarity Group]
Let $\mathcal{G}_\varepsilon = (A, *_\varepsilon, e_\varepsilon, (\cdot)_\varepsilon^{-1}, d, \varepsilon)$ be a similarity group, with an approximate inverse $(\cdot)_\varepsilon^{-1}$, such that:
\begin{align*}
& d(x *_\varepsilon x_\varepsilon^{-1}, e_\varepsilon) \leq \varepsilon, \quad \forall x \in A, \\
& d(x_\varepsilon^{-1} *_\varepsilon x, e_\varepsilon) \leq \varepsilon, \quad \forall x \in A.
\end{align*} 
If $x *_\varepsilon x_\varepsilon^{-1} \rightarrow e$ and $x_\varepsilon^{-1} *_\varepsilon x \rightarrow e$ and $*_\varepsilon \rightarrow *$ uniformly, then as $\varepsilon \rightarrow 0$, 
\begin{align}
(A, *_\varepsilon, e_\varepsilon, (\cdot)_\varepsilon^{-1}, d, \varepsilon) \rightarrow (A, *, e, (\cdot)^{-1}),
\end{align}
where $(A, *, e, (\cdot)^{-1})$ is a classical group with inverse $(\cdot)^{-1}$.
\end{proposition}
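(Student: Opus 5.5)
The plan is to follow the template of the semigroup and monoid collapse propositions and reduce everything to Lemma \ref{lemma_limit_approximate_equality}. A similarity group is in particular a similarity monoid, and so a similarity semigroup. The earlier collapse results therefore already give the following: under $*_\varepsilon \rightarrow *$ uniformly and $e_\varepsilon \rightarrow e$, the limit $(A,*,e)$ is a classical monoid, with exact associativity and exact two-sided identity. What remains is to show that the limiting map $(\cdot)^{-1} := \lim_{\varepsilon\to 0}(\cdot)_\varepsilon^{-1}$, which exists by the definition of a similarity group, is an exact two-sided inverse for $*$ with respect to $e$.

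For the inverse axiom I will set
\begin{align*}
f_\varepsilon(x) := x *_\varepsilon x_\varepsilon^{-1}, \qquad g_\varepsilon(x) := e_\varepsilon,
\end{align*}
so that the approximate inverse axiom reads $d(f_\varepsilon(x), g_\varepsilon(x)) \leq \varepsilon$. Since $g_\varepsilon \rightarrow e$ trivially (a constant map), the remaining hypothesis of the lemma is $f_\varepsilon(x) \rightarrow x * x^{-1}$ uniformly. I will derive this from a triangle-inequality split:
\begin{align*}
d\big(x *_\varepsilon x_\varepsilon^{-1}, x * x^{-1}\big) \leq d\big(x *_\varepsilon x_\varepsilon^{-1}, x *_\varepsilon x^{-1}\big) + d\big(x *_\varepsilon x^{-1}, x * x^{-1}\big).
\end{align*}
The first term is at most $L\, d(x_\varepsilon^{-1}, x^{-1})$ by the Lipschitz stability of $*_\varepsilon$, with $L$ uniform in $\varepsilon$. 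The second term goes to $0$ by uniform convergence $*_\varepsilon \rightarrow *$. The lemma then gives $x * x^{-1} = e$. The symmetric choice $f_\varepsilon(x) = x_\varepsilon^{-1} *_\varepsilon x$ gives $x^{-1} * x = e$ in the same way.

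The stated hypothesis $x *_\varepsilon x_\varepsilon^{-1} \rightarrow e$ offers a shortcut. It tells us the limit of $f_\varepsilon$ directly, and the argument above then identifies that limit with $x * x^{-1}$, which closes the loop. Combining this with the monoid structure shows that $(A,*,e,(\cdot)^{-1})$ satisfies all group axioms.

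The main obstacle is the first term above. It requires the approximate inverses to converge \emph{uniformly} in $x$, or at least pointwise, with the inverse stability constant $C$ independent of $\varepsilon$, so that the limit map $x\mapsto x^{-1}$ is well defined and Lipschitz. I will first try to use the pointwise convergence from the definition together with the Lipschitz bound, which gives pointwise convergence of $f_\varepsilon$. Pointwise convergence suffices for the lemma, since its proof uses only the convergence at the fixed point $x$. If a uniform statement is needed, I will use equicontinuity from $C$ and $L$ to upgrade pointwise convergence to uniform convergence on compact sets, mirroring the hypothesis in the collapse of similarity algebra proposition.
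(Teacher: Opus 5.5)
Your proposal is correct and follows essentially the same route as the paper: take $f_\varepsilon(x) = x *_\varepsilon x_\varepsilon^{-1}$ and $g_\varepsilon = e_\varepsilon$, apply Lemma~\ref{lemma_limit_approximate_equality} to get $x * x^{-1} = e$, argue symmetrically for $x^{-1} * x = e$, and inherit associativity and identity from the earlier collapse results. Your triangle-inequality step, using uniform Lipschitz stability together with $x_\varepsilon^{-1} \to x^{-1}$, supplies the convergence $f_\varepsilon(x) \to x * x^{-1}$ that the paper's proof leaves implicit, and you are right that the lemma's argument needs this convergence only pointwise.
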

\begin{proof}
We define:
\begin{align*}
f_\varepsilon(x) = x *_\varepsilon x_\varepsilon^{-1}, \quad g_\varepsilon = e_\varepsilon.
\end{align*}
According to approximate inverse:
\begin{align*}
d(x *_\varepsilon x_\varepsilon^{-1}, e_\varepsilon) \leq \varepsilon,
\end{align*}
we have:
\begin{align*}
d(f_\varepsilon(x), g_\varepsilon) \leq \varepsilon.
\end{align*}
As $\varepsilon \rightarrow 0$, we have $e_\varepsilon \rightarrow e$ and $*_\varepsilon \rightarrow *$ uniformly.
Then by Lemma \ref{lemma_limit_approximate_equality}, as $\varepsilon \rightarrow 0$, we have:
\begin{align*}
x * x^{-1} = e,
\end{align*}
which is the exact inverse. Likewise, it can be proved for $d(x_\varepsilon^{-1} *_\varepsilon x, e_\varepsilon) \leq \varepsilon$ collapsing into $x^{-1} * x = e$. Hence, $(A, *, e, (\cdot)^{-1})$ is a classical group.
\end{proof}

\begin{proposition}[Collapse of Similarity Semiring]
Let $\mathcal{R}_\varepsilon = (A, +_\varepsilon, *_\varepsilon, d, \varepsilon)$ be a similarity semiring, with approximate distributivity:
\begin{align*}
d\big( x *_\varepsilon (y +_\varepsilon z),\, (x *_\varepsilon y) +_\varepsilon (x *_\varepsilon z) \big) \leq \varepsilon, \quad \forall x,y,z \in A.
\end{align*}
If $+_\varepsilon \rightarrow +$ and $*_\varepsilon \rightarrow *$ uniformly, then as $\varepsilon \rightarrow 0$, 
\begin{align}
(A, +_\varepsilon, *_\varepsilon, d, \varepsilon) \rightarrow (A, +, *),
\end{align}
where $(A, +, *)$ is a classical semiring.
\end{proposition}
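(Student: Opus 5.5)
The plan follows the pattern of the preceding collapse propositions: reduce each semiring axiom to an instance of Lemma \ref{lemma_limit_approximate_equality}. The distinctive step is the distributivity identity, where both operations are nested. I define
\begin{align*}
f_\varepsilon(x,y,z) := x *_\varepsilon (y +_\varepsilon z), \qquad g_\varepsilon(x,y,z) := (x *_\varepsilon y) +_\varepsilon (x *_\varepsilon z).
\end{align*}
By approximate distributivity (\ref{equation_semiring_distributivity}), $d(f_\varepsilon, g_\varepsilon) \leq \varepsilon$ pointwise.

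It remains to verify the convergence hypothesis of the lemma, namely $f_\varepsilon \to f := x * (y + z)$ and $g_\varepsilon \to g := (x*y) + (x*z)$ uniformly. This is the step I expect to need care, because it requires combining the uniform convergence of the operations with their uniform Lipschitz bounds. For $f_\varepsilon$ I split by the triangle inequality:
\begin{align*}
d\big(x *_\varepsilon (y +_\varepsilon z),\, x * (y+z)\big) \leq d\big(x *_\varepsilon (y +_\varepsilon z),\, x *_\varepsilon (y+z)\big) + d\big(x *_\varepsilon (y+z),\, x*(y+z)\big).
\end{align*}
The first term is at most $L\, d(y +_\varepsilon z, y+z)$ by the stability (Lipschitz) axiom. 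The second is at most $\sup d(*_\varepsilon, *)$. Both tend to $0$ uniformly.

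For $g_\varepsilon$ I use the same insertion twice, once in each argument of $+_\varepsilon$. This gives
\begin{align*}
d(g_\varepsilon, g) \leq L\big(d(x*_\varepsilon y, x*y) + d(x*_\varepsilon z, x*z)\big) + \sup d(+_\varepsilon, +) \to 0.
\end{align*}
If the convergence is only uniform on compact sets, I would restrict $(x,y,z)$ to a compact set. The intermediate values then stay in a compact neighbourhood, since continuous images of compacts are compact and the perturbations are small. Lemma \ref{lemma_limit_approximate_equality} then yields $x*(y+z) = (x*y)+(x*z)$ exactly.

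The remaining semiring axioms are handled by the earlier propositions, applied verbatim:
\begin{itemize}
\item associativity of $+$ and of $*$, from the collapse of similarity semigroup;
\item the identities $0$ and $e$, from the collapse of similarity monoid, assuming $0_\varepsilon \to 0$ and $e_\varepsilon \to e$;
\item commutativity of $+$, from Lemma \ref{lemma_limit_approximate_equality} with $f_\varepsilon = x +_\varepsilon y$ and $g_\varepsilon = y +_\varepsilon x$.
\end{itemize}
Combining these, $(A,+)$ is a commutative monoid, $(A,*)$ is a monoid, and $*$ distributes over $+$, so $(A,+,*)$ is a classical semiring.
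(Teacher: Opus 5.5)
Your proposal is correct and follows the paper's proof. It uses the same $f_\varepsilon$, $g_\varepsilon$ built from approximate distributivity, invokes Lemma \ref{lemma_limit_approximate_equality} to obtain exact distributivity, and defers to the earlier collapse propositions for the remaining axioms. Your Lipschitz insertion, which shows $f_\varepsilon \to f$ and $g_\varepsilon \to g$, fills in a step the paper simply asserts; since it only needs $+_\varepsilon$ and $*_\varepsilon$ to converge at fixed points, your compact-set caveat is unnecessary.
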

\begin{proof}
We define:
\begin{align*}
f_\varepsilon(x) = x *_\varepsilon (y +_\varepsilon z), \quad g_\varepsilon(x) = (x *_\varepsilon y) +_\varepsilon (x *_\varepsilon z).
\end{align*}
According to approximate distributivity:
\begin{align*}
d\big( x *_\varepsilon (y +_\varepsilon z),\, (x *_\varepsilon y) +_\varepsilon (x *_\varepsilon z) \big) \leq \varepsilon,
\end{align*}
we have:
\begin{align*}
d(f_\varepsilon(x), g_\varepsilon(x)) \leq \varepsilon.
\end{align*}
As $\varepsilon \rightarrow 0$, we have $*_\varepsilon \rightarrow *$ and $+_\varepsilon \rightarrow +$ uniformly.
Then by Lemma \ref{lemma_limit_approximate_equality}, as $\varepsilon \rightarrow 0$, we have:
\begin{align*}
x * (y + z) = (x * y) + (x * z),
\end{align*}
which is the exact distributivity. Additive and multiplicative associativity follow from previous propositions. Hence, $(A, +, *)$ is a classical semiring.
\end{proof}

\begin{proposition}[Collapse of Similarity Ring]
Let $\mathcal{R}_\varepsilon = (A, +_\varepsilon, *_\varepsilon, -_\varepsilon, 0_\varepsilon, d, \varepsilon)$ be a similarity ring.
If approximate additive inverses:
\begin{align}
&d\big(x +_\varepsilon (-x)_\varepsilon, 0_\varepsilon \big) \leq \varepsilon, \\
&d\big((-x)_\varepsilon +_\varepsilon x, 0_\varepsilon \big) \leq \varepsilon,
\end{align}
and approximate distributivity converge uniformly, then as $\varepsilon \rightarrow 0$, 
\begin{align}
(A, +_\varepsilon, *_\varepsilon, -_\varepsilon, 0_\varepsilon, d, \varepsilon) \rightarrow (A, +, *, -, 0),
\end{align}
where $(A, +, *, -, 0)$ is a classical ring.
\end{proposition}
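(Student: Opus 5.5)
The plan is to reduce the statement to Lemma \ref{lemma_limit_approximate_equality}, following the pattern of the previous collapse propositions. I will treat each classical ring axiom as an identity $P(x)=Q(x)$ whose approximate version $d(P_\varepsilon(x),Q_\varepsilon(x))\le\varepsilon$ is part of the definition of a similarity ring. For this I assume, as in the preceding propositions, that $+_\varepsilon\to +$, $*_\varepsilon\to *$, $0_\varepsilon\to 0$ and $(-x)_\varepsilon\to -x$ uniformly.

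The first step is the additive group structure. The collapse of similarity semigroups, monoids and groups, applied to $(A,+_\varepsilon,0_\varepsilon,-_\varepsilon)$, gives additive associativity, $x+0=x=0+x$, and $x+(-x)=0=(-x)+x$. For the inverse identity, set $f_\varepsilon(x)=x+_\varepsilon(-x)_\varepsilon$ and $g_\varepsilon(x)=0_\varepsilon$; the lemma then yields $x+(-x)=0$. Commutativity of addition follows by the same lemma with $f_\varepsilon(x,y)=x+_\varepsilon y$ and $g_\varepsilon(x,y)=y+_\varepsilon x$. So $(A,+,0,-)$ is an abelian group.

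The second step handles the multiplicative part. Multiplicative associativity (and the identity $e$, if it is kept) comes from the semigroup and monoid collapse propositions applied to $(A,*_\varepsilon)$. Distributivity comes from the semiring collapse proposition. For right distributivity, if it is required, I would apply the lemma in the same way. Putting these together, $(A,+,*,-,0)$ satisfies every ring axiom.

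The main obstacle is justifying that the \emph{composite} maps converge uniformly, for example $x\mapsto x+_\varepsilon(-x)_\varepsilon\to x+(-x)$. The lemma needs $f_\varepsilon\to f$, but the hypotheses only give convergence of the individual operations. I would use the uniform Lipschitz bound $L$ from the definition of a similarity algebra:
\begin{align*}
d\big(x+_\varepsilon(-x)_\varepsilon,\;x+(-x)\big)\le d\big(x+_\varepsilon(-x)_\varepsilon,\;x+_\varepsilon(-x)\big)+d\big(x+_\varepsilon(-x),\;x+(-x)\big)\le L\,d\big((-x)_\varepsilon,-x\big)+\sup d(+_\varepsilon,+).
\end{align*}
Both terms tend to zero uniformly. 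The same telescoping argument handles the nested expressions in distributivity and associativity, at the cost of constant factors depending on $L$.
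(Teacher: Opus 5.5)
Your overall route matches the paper's. You reduce each ring axiom to Lemma~\ref{lemma_limit_approximate_equality}, treat the inverse law with $f_\varepsilon(x)=x+_\varepsilon(-x)_\varepsilon$, $g_\varepsilon=0_\varepsilon$, and defer the remaining axioms to the earlier collapse propositions. Your telescoping estimate, which uses the $\varepsilon$-independent Lipschitz constant, is a real improvement: the paper simply asserts that the composite maps converge.

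The step that fails is right distributivity. You propose to ``apply the lemma in the same way'', but the lemma only turns an existing $\varepsilon$-bound into an exact identity, and here there is no such bound. Through the semiring definition, a similarity ring imposes only the left law $d\big(x*_\varepsilon(y+_\varepsilon z),(x*_\varepsilon y)+_\varepsilon(x*_\varepsilon z)\big)\le\varepsilon$. Nothing controls $(y+_\varepsilon z)*_\varepsilon x$, and $*_\varepsilon$ is not assumed approximately commutative, so the left law cannot be transferred to the right.

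The missing law cannot be derived from the other axioms either. Let $A$ be the set of all maps $\mathbb{Z}/3\to\mathbb{Z}/3$, with pointwise addition, $x*y:=y\circ x$, $e=\mathrm{id}$, the discrete metric, and all structure maps independent of $\varepsilon$. Every operation is $1$-Lipschitz, and every axiom of a similarity ring holds with error $0$; for example $x*(y+z)=(y+z)\circ x=x*y+x*z$. The constant families converge trivially. Yet for the constant map $c_1$ with value $1$ one has $(0+0)*c_1=c_1\neq c_1+c_1=0*c_1+0*c_1$. The limit is a near-ring, not a ring, so the proposition as stated is false.

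The paper's proof has the same hole: ``distributivity also follows from previous propositions'' yields only the left law. To repair your argument, add approximate right distributivity $d\big((y+_\varepsilon z)*_\varepsilon x,(y*_\varepsilon x)+_\varepsilon(z*_\varepsilon x)\big)\le\varepsilon$ as a hypothesis, or alternatively approximate commutativity of $*_\varepsilon$. With either addition, your lemma-plus-telescoping argument goes through unchanged.
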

\begin{proof}
We define:
\begin{align*}
f_\varepsilon(x) = x +_\varepsilon (-x)_\varepsilon, \quad g_\varepsilon(x) = 0_\varepsilon.
\end{align*}
According to approximate additive inverse:
\begin{align*}
d\big(x +_\varepsilon (-x)_\varepsilon, 0_\varepsilon \big) \leq \varepsilon,
\end{align*}
we have:
\begin{align*}
d(f_\varepsilon(x), g_\varepsilon(x)) \leq \varepsilon.
\end{align*}
As $\varepsilon \rightarrow 0$, we have $+_\varepsilon \rightarrow +$ and $(-x)_\varepsilon \rightarrow -x$ uniformly.
Then by Lemma \ref{lemma_limit_approximate_equality}, as $\varepsilon \rightarrow 0$, we have:
\begin{align*}
x + (-x) = 0,
\end{align*}
which is the exact additive inverse. 
Likewise, it can be proved for $d\big((-x)_\varepsilon +_\varepsilon x, 0_\varepsilon \big) \leq \varepsilon$ collapsing into $(-x) + x = 0$.
Distributivity also follows from previous propositions. Hence, $(A, +, *, -, 0)$ is a classical ring.
\end{proof}

\begin{proposition}[Collapse of Similarity Field]
Let $\mathcal{F}_\varepsilon = (A, +_\varepsilon, *_\varepsilon, -_\varepsilon, 0_\varepsilon, e_\varepsilon, (\cdot)_\varepsilon^{-1}, d, \varepsilon)$ be a similarity field.
If multiplicative inverses:
\begin{align*}
& d(x *_\varepsilon x_\varepsilon^{-1}, e_\varepsilon) \leq \varepsilon, \quad \forall x \in A, \\
& d(x_\varepsilon^{-1} *_\varepsilon x, e_\varepsilon) \leq \varepsilon, , \quad \forall x \in A.
\end{align*}
exist for all nonzero elements in the limit, then as $\varepsilon \rightarrow 0$, 
\begin{align}
(A, +_\varepsilon, *_\varepsilon, -_\varepsilon, 0_\varepsilon, e_\varepsilon, (\cdot)_\varepsilon^{-1}, d, \varepsilon) \rightarrow (A, +, *, -, 0, e, (\cdot)^{-1}),
\end{align}
where $(A, +, *, -, 0, e, (\cdot)^{-1})$ is a classical field.
\end{proposition}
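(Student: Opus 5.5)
The plan follows the pattern of the earlier collapse propositions and reduces every field axiom to Lemma \ref{lemma_limit_approximate_equality}. A similarity field is in particular a similarity ring, so the Collapse of Similarity Ring proposition already gives a classical ring $(A,+,*,-,0)$. That ring has additive and multiplicative associativity, additive commutativity, the additive identity $0$, additive inverses and distributivity, all inherited through the Semigroup, Monoid, Group and Semiring collapse results. The remaining work is the multiplicative group structure on $A\setminus\{0\}$: commutativity of $*$, the identity $e$, and inverses $x^{-1}$.

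For commutativity I set $f_\varepsilon(x,y)=x*_\varepsilon y$ and $g_\varepsilon(x,y)=y*_\varepsilon x$. Approximate commutativity (\ref{equation_approximate_commutativity_multplication}) gives $d(f_\varepsilon,g_\varepsilon)\le\varepsilon$, and uniform convergence of $*_\varepsilon$ gives $f_\varepsilon\to f$ and $g_\varepsilon\to g$. The Lemma then yields $x*y=y*x$. The identity $e=\lim e_\varepsilon$ is handled exactly as in the Collapse of Similarity Monoid proposition.

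For inverses I take $f_\varepsilon(x)=x*_\varepsilon x_\varepsilon^{-1}$ and $g_\varepsilon=e_\varepsilon$. To show $f_\varepsilon\to x*x^{-1}$, I would use the triangle inequality together with the Lipschitz stability of $*_\varepsilon$:
\begin{align*}
d\big(x*_\varepsilon x_\varepsilon^{-1},\,x*x^{-1}\big)\le L\,d\big(x_\varepsilon^{-1},x^{-1}\big)+d\big(x*_\varepsilon x^{-1},\,x*x^{-1}\big).
\end{align*}
Both terms tend to zero, because $x_\varepsilon^{-1}\to x^{-1}$ by the definition of similarity group and $*_\varepsilon\to *$ uniformly. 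The Lemma then gives $x*x^{-1}=e$, and the symmetric argument gives $x^{-1}*x=e$.

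I expect the main obstacle to be the domain of the inverses. The approximate inverse is only required on $A\setminus\{0_\varepsilon\}$, and this set moves with $\varepsilon$. For fixed $x\neq 0$, I would use $0_\varepsilon\to 0$ to get $d(x,0_\varepsilon)>0$ for all sufficiently small $\varepsilon$, so that $x_\varepsilon^{-1}$ is defined along a tail of $\varepsilon\to 0$. That tail is enough for the limit argument above, which is exactly why the hypothesis says inverses exist ``for all nonzero elements in the limit''.

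A second gap I would need to close is $e\neq 0$. Closeness to the identity alone does not force this, so I would either include it among the collapsed axioms as an assumption, or derive it by contradiction. For the contradiction: if $e=0$, then for every $x$ we have $x=x*e=x*0=0$, using $x*0=0$, which follows in any ring from distributivity. So $A$ would be trivial, and this is excluded because a field requires a nonzero element. With this in place, $(A,+,*,-,0,e,(\cdot)^{-1})$ is a classical field.
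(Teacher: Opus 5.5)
Your proposal is correct and takes essentially the paper's route. Lemma~\ref{lemma_limit_approximate_equality} applied to $f_\varepsilon(x)=x*_\varepsilon x_\varepsilon^{-1}$ and $g_\varepsilon=e_\varepsilon$ gives $x*x^{-1}=e$, and the other axioms come from the earlier collapse propositions; you also supply what the paper glosses over (the Lipschitz estimate for $x*_\varepsilon x_\varepsilon^{-1}\to x*x^{-1}$, the tail argument for the moving domain $A\setminus\{0_\varepsilon\}$, commutativity, and $e\neq 0$), but in the $e\neq 0$ step exclude $A=\{0\}$ using the hypothesis rather than the definition of a field, which is your conclusion: $e_\varepsilon\neq 0_\varepsilon$ because $e_\varepsilon$ lies in the carrier $A\setminus\{0_\varepsilon\}$, and $A$ does not depend on $\varepsilon$.
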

\begin{proof}
We define:
\begin{align*}
f_\varepsilon(x) = x *_\varepsilon x_\varepsilon^{-1}, \quad g_\varepsilon = e_\varepsilon.
\end{align*}
According to approximate inverse:
\begin{align*}
d(x *_\varepsilon x_\varepsilon^{-1}, e_\varepsilon) \leq \varepsilon,
\end{align*}
we have:
\begin{align*}
d(f_\varepsilon(x), g_\varepsilon) \leq \varepsilon.
\end{align*}
As $\varepsilon \rightarrow 0$, we have $e_\varepsilon \rightarrow e$ and $*_\varepsilon \rightarrow *$ uniformly.
Then by Lemma \ref{lemma_limit_approximate_equality}, as $\varepsilon \rightarrow 0$, we have:
\begin{align*}
x * x^{-1} = e,
\end{align*}
which is the exact inverse. Likewise, it can be proved for $d(x_\varepsilon^{-1} *_\varepsilon x, e_\varepsilon) \leq \varepsilon$ collapsing into $x^{-1} * x = e$. 
Other characteristics of field also follow from previous propositions.
Hence, $(A, +, *, -, 0, e, (\cdot)^{-1})$ is a classical field.
\end{proof}

\begin{proposition}[Collapse of Similarity Module]
Let $\mathcal{M}_\varepsilon = (M, +_\varepsilon, \boldsymbol{\cdot}_\varepsilon, d)$ be a similarity module over a similarity ring $R_\varepsilon$.
If both $R_\varepsilon \rightarrow R$ and module actions converge uniformly, then as $\varepsilon \rightarrow 0$, 
\begin{align}
\mathcal{M}_\varepsilon \rightarrow \mathcal{M},
\end{align}
where $\mathcal{M}$ is a classical module over a ring $R$.
\end{proposition}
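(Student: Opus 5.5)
The plan is to follow the same template as the preceding collapse propositions: reduce every module axiom to an instance of Lemma \ref{lemma_limit_approximate_equality}, and treat the ring and the abelian group separately using results already proved. I split the argument into three parts.

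\emph{Step 1: the scalar ring.} Since $R_\varepsilon \rightarrow R$, the Collapse of Similarity Ring proposition gives that $R$ is a classical ring. This handles the ring-side identities $(rs)x$, $(r+s)x$ only insofar as they involve the ring operations alone.

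\emph{Step 2: the additive group of $M$.} By definition, $(M, +_\varepsilon)$ is a similarity Abelian group. The Collapse of Similarity Group proposition, together with approximate commutativity and Lemma \ref{lemma_limit_approximate_equality}, shows that $(M, +)$ is a classical abelian group.

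\emph{Step 3: the module axioms themselves.} For fixed $r \in R$ and $x, y \in M$, set
\begin{align*}
f_\varepsilon(r,x,y) := r \boldsymbol{\cdot}_\varepsilon (x +_\varepsilon y), \quad g_\varepsilon(r,x,y) := (r \boldsymbol{\cdot}_\varepsilon x) +_\varepsilon (r \boldsymbol{\cdot}_\varepsilon y).
\end{align*}
The approximate scalar distributivity axiom gives $d(f_\varepsilon, g_\varepsilon) \leq \varepsilon$. Lemma \ref{lemma_limit_approximate_equality} then yields $r\boldsymbol{\cdot}(x+y) = r\boldsymbol{\cdot} x + r\boldsymbol{\cdot} y$. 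The remaining module axioms are handled the same way: $(r+s)\boldsymbol{\cdot} x = r\boldsymbol{\cdot} x + s\boldsymbol{\cdot} x$, $(rs)\boldsymbol{\cdot} x = r\boldsymbol{\cdot}(s\boldsymbol{\cdot} x)$, and $e\boldsymbol{\cdot} x = x$. I will assume these hold up to $\varepsilon$, reading the hypothesis that ``module actions converge'' as including them, in the spirit of the Collapse of Similarity Algebra proposition where all defining identities hold up to $\varepsilon$. Each then collapses by the same lemma.

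The main obstacle is justifying that $f_\varepsilon \to f$ and $g_\varepsilon \to g$ uniformly, since these are compositions of converging operations. I would use the uniform Lipschitz bound from the definition of similarity algebra. For instance,
\begin{align*}
d\big(r\boldsymbol{\cdot}_\varepsilon(x+_\varepsilon y), r\boldsymbol{\cdot}(x+y)\big) \leq d\big(r\boldsymbol{\cdot}_\varepsilon(x+_\varepsilon y), r\boldsymbol{\cdot}_\varepsilon(x+y)\big) + d\big(r\boldsymbol{\cdot}_\varepsilon(x+y), r\boldsymbol{\cdot}(x+y)\big).
\end{align*}
The first term on the right is at most $L\, d(x+_\varepsilon y, x+y)$, and the second is at most $\sup d(\boldsymbol{\cdot}_\varepsilon, \boldsymbol{\cdot})$, so both tend to zero uniformly. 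A similar estimate applies to $g_\varepsilon$. This argument requires the Lipschitz bound to hold uniformly in $\varepsilon$, which I would state explicitly as part of the hypotheses.

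With these uniform convergences in hand, the lemma applies, and all module axioms hold exactly in the limit. Hence $\mathcal{M}$ is a classical $R$-module.
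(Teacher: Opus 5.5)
Your proposal is correct and follows the paper's proof. The paper defines the same $f_\varepsilon = r \boldsymbol{\cdot}_\varepsilon (x +_\varepsilon y)$ and $g_\varepsilon = (r \boldsymbol{\cdot}_\varepsilon x) +_\varepsilon (r \boldsymbol{\cdot}_\varepsilon y)$, gets $d(f_\varepsilon, g_\varepsilon) \leq \varepsilon$ from approximate scalar distributivity, and applies Lemma~\ref{lemma_limit_approximate_equality}. Your extra steps fill in what the paper leaves out: collapsing $R$ and $(M,+)$, bounding the composites with a uniform Lipschitz constant, and assuming the other module axioms hold up to $\varepsilon$. The paper checks only scalar distributivity and then calls it ``the exact module axioms''; since its definition has no approximate forms of $(r+s)\boldsymbol{\cdot} x$, $(rs)\boldsymbol{\cdot} x$ or $e \boldsymbol{\cdot} x$, your added hypothesis is genuinely needed for the conclusion.
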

\begin{proof}
We define:
\begin{align*}
f_\varepsilon(x) = r \boldsymbol{\cdot}_\varepsilon (x +_\varepsilon y), \quad g_\varepsilon(x) = (r \boldsymbol{\cdot}_\varepsilon x) +_\varepsilon (r \boldsymbol{\cdot}_\varepsilon y).
\end{align*}
where $r$ is a scalar. 
According to approximate scalar distributivity:
\begin{align*}
d\big( r \boldsymbol{\cdot}_\varepsilon (x +_\varepsilon y), (r \boldsymbol{\cdot}_\varepsilon x) +_\varepsilon (r \boldsymbol{\cdot}_\varepsilon y) \big) \leq \varepsilon,
\end{align*}
we have:
\begin{align*}
d(f_\varepsilon(x), g_\varepsilon(x) ) \leq \varepsilon.
\end{align*}
As $\varepsilon \rightarrow 0$, we have $+_\varepsilon \rightarrow +$ and $\boldsymbol{\cdot}_\varepsilon \rightarrow \boldsymbol{\cdot}$ uniformly.
Then by Lemma \ref{lemma_limit_approximate_equality}, as $\varepsilon \rightarrow 0$, we have:
\begin{align*}
r \boldsymbol{\cdot} (x + y) = (r \boldsymbol{\cdot} x) + (r \boldsymbol{\cdot} y),
\end{align*}
which is the exact scalar distributivity (exact module axioms). 
Hence, $\mathcal{M}$ is a classical module.
\end{proof}

\begin{proposition}[Collapse of Similarity Vector Space]
Let $\mathcal{V}_\varepsilon = (V, +_\varepsilon, \boldsymbol{\cdot}_\varepsilon, d)$ be a similarity vector space over a similarity field $\mathcal{F}_\varepsilon$.
If scalar multiplication and vector addition converge uniformly, then as $\varepsilon \rightarrow 0$, 
\begin{align}
\mathcal{V}_\varepsilon \rightarrow \mathcal{V},
\end{align}
where $\mathcal{V}$ is a classical vector space. 
\end{proposition}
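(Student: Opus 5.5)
The plan is to reduce the statement to the collapse results already established, using Lemma \ref{lemma_limit_approximate_equality} as the only analytic tool. A similarity vector space is, by definition, a similarity module over a similarity field $\mathcal{F}_\varepsilon$. So the proof has three ingredients:
\begin{itemize}
\item the field of scalars collapses (Collapse of Similarity Field);
\item the additive structure $(V,+_\varepsilon)$ collapses to an Abelian group;
\item each vector space axiom involving scalar multiplication becomes exact in the limit.
\end{itemize}

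\textbf{Step 1 (scalars).} Since $\mathcal{F}_\varepsilon$ is a similarity field, the Collapse of Similarity Field proposition gives a classical field $\mathcal{F}=(A,+,*,-,0,e,(\cdot)^{-1})$ as $\varepsilon\to 0$.

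\textbf{Step 2 (vectors).} The same argument used in the Collapse of Similarity Group proposition, applied to $(V,+_\varepsilon,0_\varepsilon,-_\varepsilon)$, yields exact associativity, identity and inverses for $+$. Adding the approximate commutativity $d(x+_\varepsilon y, y+_\varepsilon x)\le\varepsilon$ and applying Lemma \ref{lemma_limit_approximate_equality} with $f_\varepsilon(x,y)=x+_\varepsilon y$ and $g_\varepsilon(x,y)=y+_\varepsilon x$ gives $x+y=y+x$. Hence $(V,+)$ is an Abelian group.

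\textbf{Step 3 (scalar distributivity).} I would argue exactly as in the Collapse of Similarity Module proposition. Set $f_\varepsilon(x,y)=r\boldsymbol{\cdot}_\varepsilon(x+_\varepsilon y)$ and $g_\varepsilon(x,y)=(r\boldsymbol{\cdot}_\varepsilon x)+_\varepsilon(r\boldsymbol{\cdot}_\varepsilon y)$. These satisfy $d(f_\varepsilon,g_\varepsilon)\le\varepsilon$, so Lemma \ref{lemma_limit_approximate_equality} gives $r\boldsymbol{\cdot}(x+y)=r\boldsymbol{\cdot}x+r\boldsymbol{\cdot}y$.

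\textbf{Step 4 (remaining axioms).} The axioms $(r+s)\boldsymbol{\cdot}x=r\boldsymbol{\cdot}x+s\boldsymbol{\cdot}x$, $(r*s)\boldsymbol{\cdot}x=r\boldsymbol{\cdot}(s\boldsymbol{\cdot}x)$ and $e\boldsymbol{\cdot}x=x$ would be treated the same way: write each as an $\varepsilon$-approximate identity between two composite expressions and pass to the limit with the lemma. These are understood as part of the ``approximate module axioms'' in the definition, which displays only one of them.

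\textbf{Main obstacle: passing limits through composites.} Lemma \ref{lemma_limit_approximate_equality} needs $f_\varepsilon\to f$ and $g_\varepsilon\to g$ uniformly, whereas the hypothesis only gives uniform convergence of the individual operations $+_\varepsilon$ and $\boldsymbol{\cdot}_\varepsilon$. I would close this gap with the uniform Lipschitz assumption from the definition of a similarity algebra. For instance,
\begin{align*}
d\big(r\boldsymbol{\cdot}_\varepsilon(x+_\varepsilon y),\, r\boldsymbol{\cdot}(x+y)\big)
&\le d\big(r\boldsymbol{\cdot}_\varepsilon(x+_\varepsilon y),\, r\boldsymbol{\cdot}_\varepsilon(x+y)\big)
+ d\big(r\boldsymbol{\cdot}_\varepsilon(x+y),\, r\boldsymbol{\cdot}(x+y)\big)\\
&\le L\, d(x+_\varepsilon y,\, x+y) + \sup d(\boldsymbol{\cdot}_\varepsilon,\boldsymbol{\cdot}).
\end{align*}
Both terms tend to zero uniformly. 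Doing the same for each composite gives the uniform convergence the lemma requires.

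One further point is that the scalars themselves vary with $\varepsilon$ through $\mathcal{F}_\varepsilon$. The sup above must therefore be taken jointly over scalars and vectors, with scalar-side operations such as $r+_\varepsilon s$ controlled by the field's convergence from Step 1.
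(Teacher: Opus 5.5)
Your proposal is correct and follows the same route as the paper. The paper's proof only says the argument is the module case with field scalars, and that every vector space axiom follows from the approximate identities via Lemma \ref{lemma_limit_approximate_equality}. Your Lipschitz estimate for convergence of the composite expressions, and your remark that the undisplayed scalar axioms must be read into the definition, supply details the paper leaves implicit.
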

\begin{proof}
It is same as module, but with field scalars.
All vector space axioms follow by exact limit of approximate identities, according to Lemma \ref{lemma_limit_approximate_equality}.
\end{proof}

\begin{proposition}[Collapse of Similarity Lie Algebra]
Let $\mathcal{L}_\varepsilon = (\mathfrak{g}_\varepsilon, +_\varepsilon, [\cdot, \cdot]_\varepsilon, 0_\varepsilon, -_\varepsilon, d)$ be a similarity Lie algebra over a similarity field $\mathcal{F}_\varepsilon$, satisfying:
\begin{itemize}
\item approximate addition $+_\varepsilon$ with identity $0_\varepsilon$ and additive inverse $-_\varepsilon$,
\item and approximate Lie bracket $[\cdot, \cdot]_\varepsilon$ satisfying approximate bilinearity, antisymmetry, and Jacobi identity, 
\end{itemize}
and assume all operations converge uniformly:
\begin{align}
+_\varepsilon \rightarrow +, \quad -_\varepsilon \rightarrow -, \quad [\cdot, \cdot]_\varepsilon \rightarrow [\cdot, \cdot], \quad 0_\varepsilon \rightarrow 0.
\end{align}
Then, in the limit $\varepsilon \rightarrow 0$, we have:
\begin{align}
\lim_{\varepsilon \rightarrow 0} (\mathfrak{g}_\varepsilon, +_\varepsilon, [\cdot, \cdot]_\varepsilon, 0_\varepsilon, -_\varepsilon, d) = (\mathfrak{g}, +, [\cdot, \cdot], 0, -),
\end{align}
which is a classical Lie algebra over the field $\mathcal{F}$.
The approximate bilinearity collapses to exact bilinearity and approximate antisymmetry and Jacobi identity collapse to exact forms.
\end{proposition}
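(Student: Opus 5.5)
The plan is to follow the same template as the preceding collapse propositions and apply Lemma \ref{lemma_limit_approximate_equality} once for each defining identity. The vector-space part is already settled: the operations $+_\varepsilon,-_\varepsilon,0_\varepsilon$ and scalar multiplication converge uniformly, so the Collapse of Similarity Vector Space proposition shows that $(\mathfrak{g},+,0,-)$ is a classical vector space over the limit field $\mathcal{F}$, and the Collapse of Similarity Field proposition shows that $\mathcal{F}$ is a classical field. What remains is to show that the limit bracket $[\cdot,\cdot]$ is bilinear, antisymmetric, and satisfies the Jacobi identity.

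For each axiom I will write both sides as functions of the variables and check the two hypotheses of Lemma \ref{lemma_limit_approximate_equality}.

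\textbf{Bilinearity.} Fix $a,b\in\mathbb{R}$ and set
\begin{align*}
f_\varepsilon(x,y,z)=[ax+_\varepsilon by,z]_\varepsilon, \qquad g_\varepsilon(x,y,z)=a[x,z]_\varepsilon+b[y,z]_\varepsilon .
\end{align*}
The bilinearity axiom gives $d(f_\varepsilon,g_\varepsilon)\le\varepsilon$. The lemma then yields $[ax+by,z]=a[x,z]+b[y,z]$, and the second slot is handled symmetrically.

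\textbf{Antisymmetry.} I take $f_\varepsilon(x,y)=[x,y]_\varepsilon$ and $g_\varepsilon(x,y)=-_\varepsilon[y,x]_\varepsilon$, reading the antisymmetry axiom as $d([x,y]_\varepsilon,-[y,x]_\varepsilon)\le\varepsilon$. The lemma gives $[x,y]=-[y,x]$.

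\textbf{Jacobi identity.} I take $f_\varepsilon$ to be the cyclic sum $[x,[y,z]_\varepsilon]_\varepsilon+_\varepsilon[y,[z,x]_\varepsilon]_\varepsilon+_\varepsilon[z,[x,y]_\varepsilon]_\varepsilon$ and $g_\varepsilon\equiv0_\varepsilon$. The limit is $f=0$, which is the exact Jacobi identity.

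The main obstacle is the second hypothesis of the lemma: uniform convergence of $f_\varepsilon$ and $g_\varepsilon$ themselves. These are composites of the structure maps, such as nested brackets and sums of brackets, whereas the statement only assumes that the individual operations converge uniformly. The key step is to show that composition preserves uniform convergence. For a nested bracket, insert the intermediate term $[x,[y,z]]_\varepsilon$ and apply the triangle inequality:
\begin{align*}
d\big([x,[y,z]_\varepsilon]_\varepsilon,[x,[y,z]]\big) \le L\, d\big([y,z]_\varepsilon,[y,z]\big) + d\big([x,[y,z]]_\varepsilon,[x,[y,z]]\big).
\end{align*}
Here the uniform Lipschitz constant $L$ comes from the definition of a similarity algebra. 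Both terms on the right tend to $0$ uniformly, so the nested bracket converges uniformly. The same telescoping argument covers outer sums via the Lipschitz bound on $+_\varepsilon$, and covers scalar multiples via convergence of the scalar action.

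If uniformity holds only on compact sets, I will run the whole argument pointwise at a fixed triple $(x,y,z)$. That suffices, because the lemma is applied one point at a time. Finally, since the limits are unique, the limit structure $(\mathfrak{g},+,[\cdot,\cdot],0,-)$ is well defined and satisfies all the Lie algebra axioms exactly.
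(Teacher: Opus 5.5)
Your proposal is correct and follows essentially the same route as the paper, which also applies Lemma~\ref{lemma_limit_approximate_equality} one identity at a time: first to the additive (vector-space) axioms, then to bilinearity, antisymmetry, and the Jacobi identity. The differences are refinements rather than a new route: you use the uniform Lipschitz bounds and a triangle-inequality telescoping to show that composite maps such as nested brackets and sums of brackets converge uniformly, which the paper simply asserts, and you carry the scalars $a,b$ through bilinearity, whereas the paper's proof checks only additivity.
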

\begin{proof}
Step 1 --- Collapse of the Vector Space:
\begin{itemize}
\item $(\mathfrak{g}_\varepsilon, +_\varepsilon, 0_\varepsilon, -_\varepsilon)$ satisfies approximate addition:
\begin{align*}
x +_\varepsilon (y +_\varepsilon z) \approx_\varepsilon (x +_\varepsilon y) +_\varepsilon z, \quad x +_\varepsilon 0_\varepsilon \approx_\varepsilon x, \quad x +_\varepsilon (-x)_\varepsilon \approx_\varepsilon 0_\varepsilon.
\end{align*}
\item Uniform convergence: By assumption, we have $\forall x,y,z\in \mathfrak{g}_\varepsilon$:
\begin{align*}
\lim_{\varepsilon \rightarrow 0} (x +_\varepsilon y) = x + y, \quad \lim_{\varepsilon \rightarrow 0} 0_\varepsilon = 0, \quad \lim_{\varepsilon \rightarrow 0} (-x)_\varepsilon = -x.
\end{align*}
\item Implication: By Lemma \ref{lemma_limit_approximate_equality}, taking the limit as $\varepsilon \rightarrow 0$:
\begin{align*}
(x + y) + z = x + (y + z), \quad x+0=x, \quad x+(-x)=0.
\end{align*}
Thus, the classical vector space addition axioms hold exactly.
\end{itemize}

Step 2 --- Collapse of the Lie Bracket:

The approximate Lie bracket satisfies:
\begin{itemize}
\item Approximate bilinearity: 
\begin{align*}
[x +_\varepsilon y, z]_\varepsilon \approx_\varepsilon [x, z]_\varepsilon + [y, z]_\varepsilon, \quad [x, y +_\varepsilon z]_\varepsilon \approx_\varepsilon [x, y]_\varepsilon + [x, z]_\varepsilon.
\end{align*}
\item Approximate antisymmetry:
\begin{align*}
[x, y]_\varepsilon \approx_\varepsilon -[y,x]_\varepsilon.
\end{align*}
\item Approximate Jacobi identity:
\begin{align*}
[x, [y, z]_\varepsilon]_\varepsilon +_\varepsilon [y, [z, x]_\varepsilon]_\varepsilon +_\varepsilon [z, [x, y]_\varepsilon]_\varepsilon \approx_\varepsilon 0_\varepsilon.
\end{align*}
\item Uniform convergence: As $\varepsilon \rightarrow 0$:
\begin{align*}
&\lim_{\varepsilon \rightarrow 0} [x +_\varepsilon y, z]_\varepsilon = [x + y, z], \quad \lim_{\varepsilon \rightarrow 0} [x,  y +_\varepsilon z]_\varepsilon = [x, y+z], \\
&\lim_{\varepsilon \rightarrow 0} [x, y]_\varepsilon = [x, y], \quad \lim_{\varepsilon \rightarrow 0} 0_\varepsilon = 0.
\end{align*}

\item Implication: By Lemma \ref{lemma_limit_approximate_equality}, taking the limit as $\varepsilon \rightarrow 0$:
\begin{itemize}
\item Bilinearity:
\begin{align*}
[x+y, z] = [x,z] + [y,z], \quad [x, y+z] = [x,y] + [x,z].
\end{align*}
\item Antisymmetry:
\begin{align*}
[x,y] = -[y,x].
\end{align*}
\item Jacobi identity:
\begin{align*}
[x, [y, z]] + [y, [z, x]] + [z, [x, y]] = 0.
\end{align*}
\end{itemize}
Thus, all classical Lie algebra axioms are satisfied.
\end{itemize}

By Steps 1 and 2, we have in the limit:
\begin{align*}
\lim_{\varepsilon \rightarrow 0} \mathfrak{g}_\varepsilon = (\mathfrak{g}, +, [\cdot, \cdot], 0),
\end{align*}
which satisfies all vector space and Lie algebra axioms exactly.
Therefore, $\mathfrak{g}$ is a classical Lie algebra over field $\mathcal{F} = \lim_{\varepsilon \rightarrow 0} \mathcal{F}_\varepsilon$.
\end{proof}

\begin{lemma}[$C^1$ Limit of Lie Group Structures]\label{lemma_C1_limit_lie_group}
Let $(G_\varepsilon, m_\varepsilon, \iota_\varepsilon, e_\varepsilon)$ 
be $C^1$ Lie groups such that:
\begin{itemize}
\item $m_\varepsilon \to m$ in $C^1_{\mathrm{loc}}$,
\item $\iota_\varepsilon \to \iota$ in $C^1_{\mathrm{loc}}$,
\item $e_\varepsilon \to e$,
\item The associativity, identity, and inverse identities hold up to error terms $R_\varepsilon$ that converge to zero uniformly on compact subsets.
\end{itemize}
Then, the limit $(G,m,\iota,e)$ defines a classical $C^1$ Lie group.
\end{lemma}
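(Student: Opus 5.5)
The plan is to separate the algebraic content of the statement from its differential content. The algebraic identities in the limit will come from Lemma \ref{lemma_limit_approximate_equality}, applied locally on compact sets. The $C^1$ regularity of $m$ and $\iota$ will come from closedness of $C^1$ maps under $C^1_{\mathrm{loc}}$ convergence. Throughout I will regard all $G_\varepsilon$ as the same underlying $C^1$ manifold $G$ (or as identified with $G$ by charts converging in $C^1$), so that statements like $m_\varepsilon \to m$ make sense on a fixed space $G\times G$.

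\textbf{Step 1: the group identities hold exactly for the limit.} Fix a compact set $K\subset G$ containing $e$ and a neighbourhood of each $e_\varepsilon$ for small $\varepsilon$. This is possible because $e_\varepsilon\to e$. Set
\begin{align*}
f_\varepsilon(g,h,k)=m_\varepsilon(m_\varepsilon(g,h),k),\qquad g_\varepsilon(g,h,k)=m_\varepsilon(g,m_\varepsilon(h,k)).
\end{align*}
By hypothesis, $d(f_\varepsilon,g_\varepsilon)\le R_\varepsilon\to 0$ uniformly on $K^3$. Lemma \ref{lemma_limit_approximate_equality} is stated with $\varepsilon$, but its proof works verbatim with any error $R_\varepsilon\to0$. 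To apply it I need $f_\varepsilon\to m(m(g,h),k)$ uniformly on $K^3$, which I would verify by the split
\begin{align*}
d\big(m_\varepsilon(m_\varepsilon(g,h),k),\,m(m(g,h),k)\big)\le d\big(m_\varepsilon(m_\varepsilon(g,h),k),\,m_\varepsilon(m(g,h),k)\big)+d\big(m_\varepsilon(m(g,h),k),\,m(m(g,h),k)\big).
\end{align*}
The second term is small by uniform convergence on the compact set $m(K,K)\times K$. The first term is small because the $C^1_{\mathrm{loc}}$ convergence gives locally uniformly bounded derivatives, hence a local uniform Lipschitz bound, together with $m_\varepsilon(g,h)\to m(g,h)$ uniformly. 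The same argument handles $g_\varepsilon$, the two identity laws (using $e_\varepsilon\to e$ and the same Lipschitz estimate), and the two inverse laws (composing with $\iota_\varepsilon\to\iota$). Since $K$ is arbitrary, I obtain the exact identities
\begin{align*}
m(m(g,h),k)=m(g,m(h,k)),\qquad m(g,e)=g=m(e,g),\qquad m(g,\iota(g))=e=m(\iota(g),g)
\end{align*}
for all $g,h,k\in G$, so $(G,m,\iota,e)$ is a group.

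\textbf{Step 2: smoothness of the limit maps.} Convergence in $C^1_{\mathrm{loc}}$ means that in local charts both the maps and their first derivatives converge uniformly on compacta. A uniform limit of continuous functions is continuous, and a standard theorem states that if $F_\varepsilon\to F$ uniformly and $DF_\varepsilon\to H$ uniformly on compacta, then $F$ is $C^1$ with $DF=H$. Applying this chartwise shows that $m$ and $\iota$ are $C^1$. Combined with Step 1, $(G,m,\iota,e)$ is a $C^1$ Lie group.

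\textbf{Main obstacle.} The delicate point is the composition estimate in Step 1, that is, showing that $m_\varepsilon\circ(m_\varepsilon\times\mathrm{id})$ converges to $m\circ(m\times\mathrm{id})$ uniformly on compacta. This requires that the images $m_\varepsilon(K\times K)$ stay inside a fixed compact set for all small $\varepsilon$, and that a uniform local Lipschitz bound holds there. I would get both by taking a slightly larger compact neighbourhood of $m(K\times K)$ and using the uniform derivative bounds supplied by $C^1_{\mathrm{loc}}$ convergence. If the $G_\varepsilon$ are genuinely different manifolds, this step additionally needs the identification charts to converge in $C^1$, and I would state that identification explicitly as part of the standing interpretation of the hypotheses.
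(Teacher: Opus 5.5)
Your proposal is correct and follows the same route as the paper: pass to the limit in the approximate associativity, identity and inverse identities using the convergence of $m_\varepsilon$, $\iota_\varepsilon$, $e_\varepsilon$, and get $C^1$ regularity of $m$ and $\iota$ from the $C^1_{\mathrm{loc}}$ convergence. Your composition estimate spells out what the paper compresses into ``using $C^0$ convergence of $m_\varepsilon$''; the uniform Lipschitz bound is not actually needed, since with $x_\varepsilon=m_\varepsilon(g,h)\to x=m(g,h)$ the split $d(m_\varepsilon(x_\varepsilon,k),m(x_\varepsilon,k))+d(m(x_\varepsilon,k),m(x,k))$ uses only locally uniform convergence and continuity of $m$.
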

\begin{proof}

Since $m_\varepsilon \to m$ in $C^1_{\mathrm{loc}}$, the limit map 
$m : G \times G \to G$ is $C^1$. Similarly, $\iota$ is $C^1$.

We verify the group axioms:
\begin{itemize}
\item Associativity:
For each $\varepsilon$, we have:
\begin{align*}
m_\varepsilon(m_\varepsilon(x,y),z)
=
m_\varepsilon(x,m_\varepsilon(y,z))
+ R_\varepsilon(x,y,z),
\end{align*}
where $R_\varepsilon \to 0$ uniformly on compact sets.
Passing to the limit as $\varepsilon \to 0$ and using 
$C^0$ convergence of $m_\varepsilon$, we obtain:
\begin{align*}
m(m(x,y),z)
=
m(x,m(y,z)).
\end{align*}
Thus, $m$ is associative.
\item Identity:
Since $e_\varepsilon \to e$ and:
\begin{align*}
m_\varepsilon(x,e_\varepsilon)
=
x + r_\varepsilon(x),
\end{align*}
with $r_\varepsilon \to 0$, passing to the limit yields:
\begin{align*}
m(x,e)=x.
\end{align*}
Similarly, $m(e,x)=x$.
\item Inverse:
As $\iota_\varepsilon(x)$ is the inversion map of $x$, we have:
\begin{align*}
m_\varepsilon(x,\iota_\varepsilon(x))
=
e_\varepsilon + s_\varepsilon(x),
\end{align*}
with $s_\varepsilon \to 0$. Passing to the limit, as $\varepsilon \to 0$, gives:
\begin{align*}
m(x,\iota(x))=e.
\end{align*}
Thus, $\iota$ is a two-sided inverse.
\end{itemize}

Therefore $(G,m,\iota,e)$ satisfies the group axioms, and since the structure maps are $C^1$, it is a $C^1$ Lie group.
\end{proof}

\begin{proposition}[Collapse of Similarity Lie Group]\label{proposition_collapse_lie_group}
Let $\mathfrak{L}_\varepsilon = (G_\varepsilon, \boldsymbol{\cdot}_\varepsilon, e_\varepsilon, (\cdot)_\varepsilon^{-1}, d)$ be a similarity Lie group, i.e., a smooth manifold with:
\begin{itemize}
\item approximate multiplication $\boldsymbol{\cdot}_\varepsilon$ satisfying associativity up to $\varepsilon$,
\item approximate identity $e_\varepsilon$ and approximate inverse $(\cdot)_\varepsilon^{-1}$,
\item and approximate smooth structure (charts $\phi_\varepsilon$ satisfy Lipschitz conditions with error $\leq \varepsilon$.
\end{itemize}
Assume all operations converge uniformly:
\begin{align}
\boldsymbol{\cdot}_\varepsilon \rightarrow \boldsymbol{\cdot}, \quad (\cdot)_\varepsilon^{-1} \rightarrow (\cdot)^{-1}, \quad e_\varepsilon \rightarrow e.
\end{align}
Then, in the limit $\varepsilon \rightarrow 0$, we have:
\begin{align}
\lim_{\varepsilon \rightarrow 0} (G_\varepsilon, \boldsymbol{\cdot}_\varepsilon, e_\varepsilon, (\cdot)_\varepsilon^{-1}, d) = (G, \boldsymbol{\cdot}, e, (\cdot)^{-1}),
\end{align}
which is a classical Lie group and its tangent space at the identity is a classical Lie algebra.
The approximate group axioms collapse to exact group axioms, the smooth manifold approximations collapse to a classical smooth structure, and the associated Lie algebra of $G_\varepsilon$ collapses to the classical Lie algebra via the tangent space at the identity.
\end{proposition}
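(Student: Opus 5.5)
The plan is to reduce the statement to Lemma \ref{lemma_C1_limit_lie_group} for the group part and to the Collapse of Similarity Lie Algebra proposition for the infinitesimal part. First, I will fix the analytic setting. By the definition of a similarity Lie group, each $G_\varepsilon$ is a $C^1$ manifold and $m_\varepsilon(g,h) = g \boldsymbol{\cdot}_\varepsilon h$ and $\iota_\varepsilon(g) = g_\varepsilon^{-1}$ converge to $m,\iota$ in $C^1_{\mathrm{loc}}$. I will work in a fixed chart $\phi$ around the limit identity $e$, where these maps are given by $C^1$ maps between open subsets of $\mathbb{R}^n$. Because $e_\varepsilon \to e$, for small $\varepsilon$ all relevant points lie in one chart domain, and the metric $d$ is locally comparable with the Euclidean norm there. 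The approximate associativity, identity and inverse bounds $d(\cdot,\cdot)\le\varepsilon$ then become error terms $R_\varepsilon, r_\varepsilon, s_\varepsilon$ with $\|R_\varepsilon\| \le C\varepsilon$ uniformly on compacts, which is exactly the fourth hypothesis of Lemma \ref{lemma_C1_limit_lie_group}. Applying that lemma (equivalently, Lemma \ref{lemma_limit_approximate_equality} axiom by axiom) gives that $(G,m,\iota,e)$ is a classical $C^1$ Lie group.

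Second, I will show that the smooth structure collapses. The assumption that the charts $\phi_\varepsilon$ are Lipschitz up to error $\le \varepsilon$ will be read as $\phi_\varepsilon \to \phi$ in $C^1_{\mathrm{loc}}$ with uniformly bounded inverse differentials. The transition maps $\phi\circ\phi_\varepsilon^{-1}$ then converge to the identity in $C^1$, so the limit atlas is a $C^1$ atlas compatible with $m$ and $\iota$.

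Third, I will handle the Lie algebra. I define $\mathfrak{g}_\varepsilon = T_{e_\varepsilon}G_\varepsilon \cong \mathbb{R}^n$ through the chart, and take the approximate bracket to be the antisymmetrized second-order term of $m_\varepsilon$ at $e_\varepsilon$. Here the main obstacle appears: $C^1$ convergence does not control second derivatives, and the bracket of a $C^1$ group is defined classically only after invoking the regularity result that a $C^1$ (indeed topological) group is analytic (Gleason--Montgomery--Zippin). My first approach is to avoid second derivatives. I would define $[X,Y]$ via the commutator limit
\begin{align*}
[X,Y] = \lim_{t\to 0} t^{-2}\,\phi\big(\exp(tX)\exp(tY)\exp(tX)^{-1}\exp(tY)^{-1}\big),
\end{align*}
computed with $m_\varepsilon$ and $\iota_\varepsilon$, and then control the $\varepsilon$-dependence using the uniform $C^1$ bounds together with the $\varepsilon$-estimates. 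If this uniformity fails, I would instead add the assumption of $C^2_{\mathrm{loc}}$ convergence, which makes $[\cdot,\cdot]_\varepsilon \to [\cdot,\cdot]$ immediate.

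Once the brackets converge uniformly on compacts, the Collapse of Similarity Lie Algebra proposition shows that the limit bracket is bilinear, antisymmetric and satisfies Jacobi. It agrees with the Lie algebra of $G$ at $T_eG$, which completes the statement.
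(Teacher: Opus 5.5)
Your handling of the group axioms follows the paper's own route (its Steps 1--4: Lemma \ref{lemma_limit_approximate_equality} axiom by axiom, then Lemma \ref{lemma_C1_limit_lie_group}), and that part is fine. The problem is the Lie-algebra part. Your suspicion there is correct, but it should be stated as a definite conclusion rather than kept as a fallback.

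The paper's Step 5 claims the bracket ``depends continuously on the first derivative of the multiplication map'' and deduces $[\cdot,\cdot]_\varepsilon\to[\cdot,\cdot]$ from $C^1_{\mathrm{loc}}$ convergence. That is false, so your first approach cannot succeed either. Here is a family satisfying every hypothesis. On $\mathbb{R}^3$ with the Euclidean metric, let $B(x,y)=\tfrac12(x_1y_2-x_2y_1)e_3$, let $\beta$ be a bump equal to $1$ on the unit ball and supported in the ball of radius $2$, set $\delta=\sqrt{\varepsilon/8}$, and define
\begin{align*}
m_\varepsilon(x,y)=x+y+\beta(x/\delta)\,\beta(y/\delta)\,B(x,y),\qquad e_\varepsilon=0,\qquad \iota_\varepsilon(x)=-x.
\end{align*}
Identity and inverse hold exactly, because $B(x,0)=0$ and $B(x,x)=0$. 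The associator is bounded by $8\delta^2=\varepsilon$. The perturbation has $C^1$ norm $O(\delta)$, so $m_\varepsilon\to+$ in $C^1$ and the limit Lie algebra is abelian.

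However, on the $\delta$-ball around $e_\varepsilon$ the law $m_\varepsilon$ \emph{is} the Heisenberg law. So the antisymmetrized mixed second derivative at $e_\varepsilon$, your commutator limit $t^{-2}\phi(\cdots)$, and the bracket of left-invariant fields at $e_\varepsilon$ all equal the Heisenberg bracket for every $\varepsilon>0$. None of them converges to $0$. Independently of this example, one-parameter subgroups are not defined for a non-associative $m_\varepsilon$, and any $O(\varepsilon)$ defect is amplified by $t^{-2}$ as $t\to0$.

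So convergence of brackets does not follow from the stated hypotheses. $C^2_{\mathrm{loc}}$ convergence, which this example violates (its second derivatives at $e_\varepsilon$ are $O(1)$), is a necessary strengthening of the proposition, and you should present it that way. The alternative is to drop the bracket-convergence claim and keep only that the limit $G$ is a Lie group with a Lie algebra. As you note, for merely $C^1$ structure maps this already needs Gleason--Montgomery--Zippin, a point the paper's Step 5 also skips.

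With $C^2_{\mathrm{loc}}$ convergence and $e_\varepsilon\to e$, your $[X,Y]_\varepsilon=\mathrm{Alt}\,D_1D_2m_\varepsilon(e_\varepsilon,e_\varepsilon)(X,Y)$ does converge to $\mathrm{Alt}\,D_1D_2m(e,e)(X,Y)$. In a chart centred at $e$, this is the bracket of left-invariant fields of the limit group; antisymmetrization removes the chart dependence.

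Your final step should not invoke the Collapse of Similarity Lie Algebra proposition, for two reasons. First, you have not verified its hypotheses. Second, the group-level $\varepsilon$-bounds do not give an $\varepsilon$-Jacobi bound for $[\cdot,\cdot]_\varepsilon$: the Jacobi defect of such a bracket is governed by third-order terms of the associator, which a $C^0$ bound on the associator does not control. Jacobi for the limit bracket comes instead from identifying it with the Lie bracket of the limit Lie group $G$, after which that proposition is superfluous.
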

\begin{proof}
Let $d$ be a metric controlling similarity as in Eq. (\ref{equation_d_x_y}).
Let charts $\phi_\varepsilon: U_\varepsilon \subset G_\varepsilon \rightarrow \mathbb{R}^n$ satisfy approximate Lipschitz smoothness:
\begin{align*}
\lVert \phi_\varepsilon(g \boldsymbol{\cdot}_\varepsilon h) - \phi_\varepsilon(g) - \phi_\varepsilon(h) \rVert \leq L \varepsilon,
\end{align*}
where $L$ is the Lipschitz constant. 

Step 1 --- Collapse of Approximate Associativity:
\begin{itemize}
\item Approximate associativity: 
\begin{align*}
(g \boldsymbol{\cdot}_\varepsilon h) \boldsymbol{\cdot}_\varepsilon k \approx_\varepsilon g \boldsymbol{\cdot}_\varepsilon (h \boldsymbol{\cdot}_\varepsilon k).
\end{align*}
\item Uniform convergence, as $\varepsilon \rightarrow 0$, implies:
\begin{align*}
&\lim_{\varepsilon \rightarrow 0} ((g \boldsymbol{\cdot}_\varepsilon h) \boldsymbol{\cdot}_\varepsilon k) = (g \boldsymbol{\cdot} h) \boldsymbol{\cdot} k, \\
&\lim_{\varepsilon \rightarrow 0} (g \boldsymbol{\cdot}_\varepsilon (h \boldsymbol{\cdot}_\varepsilon k)) = g \boldsymbol{\cdot} (h \boldsymbol{\cdot} k).
\end{align*}
\item Implication: By Lemma \ref{lemma_limit_approximate_equality}, taking the limit as $\varepsilon \rightarrow 0$:
\begin{align*}
(g \boldsymbol{\cdot} h) \boldsymbol{\cdot} k = g \boldsymbol{\cdot} (h \boldsymbol{\cdot} k).
\end{align*}
Thus, approximate associativity collapses exactly.
\end{itemize}

Step 2 --- Collapse of Identity:
\begin{itemize}
\item Approximate identity: 
\begin{align*}
g \boldsymbol{\cdot}_\varepsilon e_\varepsilon \approx_\varepsilon g, \quad e_\varepsilon \boldsymbol{\cdot}_\varepsilon g \approx_\varepsilon g.
\end{align*}
\item Uniform convergence, as $\varepsilon \rightarrow 0$, implies:
\begin{align*}
&\lim_{\varepsilon \rightarrow 0} g \boldsymbol{\cdot}_\varepsilon e_\varepsilon = g \boldsymbol{\cdot} e, \quad \lim_{\varepsilon \rightarrow 0} e_\varepsilon \boldsymbol{\cdot}_\varepsilon g = e \boldsymbol{\cdot} g.
\end{align*}
\item Implication: By Lemma \ref{lemma_limit_approximate_equality}, taking the limit as $\varepsilon \rightarrow 0$:
\begin{align*}
g \boldsymbol{\cdot} e = e \boldsymbol{\cdot} g = g.
\end{align*}
Thus, identity element collapses exactly.
\end{itemize}

Step 3 --- Collapse of Inverses:
\begin{itemize}
\item Approximate inverse: 
\begin{align*}
g \boldsymbol{\cdot}_\varepsilon g_\varepsilon^{-1} \approx_\varepsilon e_\varepsilon, \quad g_\varepsilon^{-1} \boldsymbol{\cdot}_\varepsilon g \approx_\varepsilon e_\varepsilon.
\end{align*}
\item Uniform convergence, as $\varepsilon \rightarrow 0$, implies:
\begin{align*}
&\lim_{\varepsilon \rightarrow 0} g \boldsymbol{\cdot}_\varepsilon g_\varepsilon^{-1} = g \boldsymbol{\cdot} g^{-1} = e, \\
&\lim_{\varepsilon \rightarrow 0} g_\varepsilon^{-1} \boldsymbol{\cdot}_\varepsilon g = g^{-1} \boldsymbol{\cdot} g = e.
\end{align*}
\item Implication: By Lemma \ref{lemma_limit_approximate_equality}, taking the limit as $\varepsilon \rightarrow 0$:
\begin{align*}
g \boldsymbol{\cdot} g^{-1} = g^{-1} \boldsymbol{\cdot} g = e.
\end{align*}
Thus, the approximate inverse collapses exactly.
\end{itemize}

% Step 4 --- Collapse of Smooth Manifold:
% \begin{itemize}
% \item Charts $\phi_\varepsilon: U_\varepsilon \subset G_\varepsilon \rightarrow \mathbb{R}^n$ satisfy approximate Lipschitz bounds.
% \item As $\varepsilon \rightarrow 0$, the charts converge uniformly:
% \begin{align*}
% \phi_\varepsilon(g \boldsymbol{\cdot}_\varepsilon h) \rightarrow \phi(g \boldsymbol{\cdot} h).
% \end{align*}
% The manifold $G_\varepsilon$ collapses to a smooth manifold $G$ in the classical sense.
% \end{itemize}

% Step 5 --- Tangent Space and Lie Algebra:
% \begin{itemize}
% \item Tangent space at identity $T_{e_\varepsilon} G_\varepsilon$ has bracket:
% \begin{align*}
% [X, Y]_\varepsilon = \frac{d}{dt} \frac{d}{ds} \Big( \exp_\varepsilon(tX) \boldsymbol{\cdot}_\varepsilon \exp_\varepsilon(sY) \boldsymbol{\cdot}_\varepsilon \exp_\varepsilon(-tX) \boldsymbol{\cdot}_\varepsilon \exp_\varepsilon(-sY) \Big) \Big|_{t=s=0}.
% \end{align*}
% \item As $\varepsilon \rightarrow 0$, all approximate operations converge:
% \begin{align*}
% [X, Y]_\varepsilon \rightarrow [X, Y].
% \end{align*}
% \item By steps 1 to 4, the Lie bracket satisfies classical Lie algebra axioms \cite{bourbaki1989lie}. Therefore, the tangent space at identity is a classical Lie algebra.
% \end{itemize}

Step 4 --- Collapse of Smooth Structure:

For each $\varepsilon$, $G_\varepsilon$ is a $C^1$ manifold and the multiplication $m_\varepsilon$ and inversion $\iota_\varepsilon$ are $C^1$ maps.

By assumption, $m_\varepsilon \to m$ and $\iota_\varepsilon \to \iota$ in $C^1_{\mathrm{loc}}$ uniformly on compact sets. Therefore, the limiting maps $m$ and $\iota$ are $C^1$.

% Since associativity, identity, and inverse properties hold exactly in the limit (by Lemma \ref{lemma_limit_approximate_equality}), $(G,m,e,\iota)$ is a $C^1$ Lie group.

By the preceding convergence assumptions, we have:
\begin{align*}
&m_\varepsilon \to m \quad \text{in } C^1_{\mathrm{loc}}, \\
&\iota_\varepsilon \to \iota \quad \text{in } C^1_{\mathrm{loc}}, \\
&e_\varepsilon \to e.
\end{align*}
Moreover, associativity, identity, and inverse properties hold up to error $\varepsilon$.
Therefore, by the Lemma \ref{lemma_C1_limit_lie_group} ($C^1$ Limit of Lie Group Structures), the limiting structure $(G,m,\iota,e)$ is a $C^1$ Lie group.

Step 5 --- Collapse of the Tangent Space:

Let $T_{e_\varepsilon}G_\varepsilon$ denote the tangent space at the identity.
Since $m_\varepsilon \to m$ in $C^1_{\mathrm{loc}}$, 
their differentials converge uniformly on compact sets, 
and in particular at $(e,e)$.

The Lie bracket on the tangent space is defined via the commutator of left-invariant vector fields, which depends continuously on the first derivative of the multiplication map.
Therefore, the induced Lie brackets satisfy $[\cdot,\cdot]_\varepsilon \to [\cdot,\cdot]$ uniformly on compact subsets.
Hence the tangent space at the identity collapses to a classical Lie algebra.

Step 6 --- Conclusion: all approximate operations of the similarity Lie group collapse exactly as $\varepsilon \rightarrow 0$:
\begin{align*}
&G_\varepsilon \rightarrow G, \\
&T_{e_\varepsilon} G_\varepsilon \rightarrow \mathfrak{g}.
\end{align*}
Thus, Lie group axioms and Lie algebra collapse exactly. 
\end{proof}

This unifies all previous collapse theorems (algebraic, module/vector, Lie algebra, Lie group) into a single collapse theorem.
According to this theorem, approximate operations---such as approximate associativity, distributivity, Lie bracket properties, and smoothness---all collapse to the classical operations in the limit $\varepsilon \rightarrow 0$.

\begin{theorem}[Collapse Theorem]\label{theorem_collapse}
Let $\mathcal{S}_\varepsilon$ denote a similarity structure over a similarity field $\mathcal{F}_\varepsilon$, where $\mathcal{S}_\varepsilon$ can be any of:
\begin{itemize}
\item Similarity semi-group $\mathcal{G}_\varepsilon = (A, *_\varepsilon, d, \varepsilon)$
\item Similarity monoid $\mathcal{M}_\varepsilon = (A, *_\varepsilon, e_\varepsilon, d, \varepsilon)$
\item similarity group $\mathcal{G}_\varepsilon = (A, *_\varepsilon, e_\varepsilon, (\cdot)_\varepsilon^{-1}, d, \varepsilon)$
\item Similarity semiring $\mathcal{R}_\varepsilon = (A, +_\varepsilon, *_\varepsilon, d, \varepsilon)$
\item Similarity ring $\mathcal{R}_\varepsilon = (A, +_\varepsilon, *_\varepsilon, -_\varepsilon, 0_\varepsilon, d, \varepsilon)$
\item Similarity field $\mathcal{F}_\varepsilon = (A, +_\varepsilon, *_\varepsilon, -_\varepsilon, 0_\varepsilon, e_\varepsilon, (\cdot)_\varepsilon^{-1}, d, \varepsilon)$
\item Similarity module $\mathcal{M}_\varepsilon = (M, +_\varepsilon, \boldsymbol{\cdot}_\varepsilon, d)$
\item Similarity vector space $\mathcal{V}_\varepsilon = (V, +_\varepsilon, \boldsymbol{\cdot}_\varepsilon, d)$
\item Similarity Lie algebra $\mathcal{L}_\varepsilon = (\mathfrak{g}_\varepsilon, +_\varepsilon, [\cdot, \cdot]_\varepsilon, 0_\varepsilon, -_\varepsilon, d)$
\item Similarity Lie group $\mathfrak{L}_\varepsilon = (G_\varepsilon, \boldsymbol{\cdot}_\varepsilon, e_\varepsilon, (\cdot)_\varepsilon^{-1}, d)$ with smooth manifold charts $\phi_\varepsilon$
\end{itemize}
Suppose all approximate operations and elements converge uniformly as $\varepsilon \rightarrow 0$ \cite{rassias1978stability,hyers1941stability,ulam1960collection}:
\begin{align*}
&+_\varepsilon \rightarrow +, \quad *_\varepsilon \rightarrow *, \quad -_\varepsilon \rightarrow -, \\
&(\cdot)_\varepsilon^{-1} \rightarrow (\cdot)^{-1}, \quad e_\varepsilon \rightarrow e, \quad 0_\varepsilon \rightarrow 0, \\
&[\cdot, \cdot]_\varepsilon \rightarrow [\cdot, \cdot], \quad \phi_\varepsilon \rightarrow \phi.
\end{align*}
In the case of similarity Lie groups, we additionally assume that the multiplication and inversion maps converge in the $C^1$ topology uniformly on compact subsets.

Then, in the limit, we have:
\begin{align}
\lim_{\varepsilon \rightarrow 0} \mathcal{S}_\varepsilon = \mathcal{S},
\end{align}
where $\mathcal{S}$ is the corresponding classical structure, i.e., classical semi-group, monoid, group, semi-ring, ring, field, module, vector space, Lie algebra, or Lie group.

Moreover, the tangent space of a similarity Lie group at the identity collapses to a classical Lie algebra:
\begin{align}
T_{e_\varepsilon} G_\varepsilon \rightarrow \mathfrak{g}.
\end{align}
\end{theorem}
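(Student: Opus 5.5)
The theorem is a case-by-case aggregation of the collapse propositions already established, and I would prove it that way. First I fix the structure $\mathcal{S}_\varepsilon$ from the list. Then I observe that every defining axiom of that structure can be written as an approximate identity
\begin{align*}
d\big(P_\varepsilon(x_1,\dots,x_n),\, Q_\varepsilon(x_1,\dots,x_n)\big) \leq \varepsilon,
\end{align*}
where $P_\varepsilon$ and $Q_\varepsilon$ are finite compositions of the approximate operations and constants: $+_\varepsilon, *_\varepsilon, -_\varepsilon, (\cdot)_\varepsilon^{-1}, [\cdot,\cdot]_\varepsilon, e_\varepsilon, 0_\varepsilon$.

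The first real step is a composition lemma: $P_\varepsilon \to P$ uniformly whenever the constituent operations converge uniformly. I would argue by induction on the depth of the term, using the uniform Lipschitz bound from the definition of a similarity algebra. For a binary operation $\circ$,
\begin{align*}
d\big(\circ_\varepsilon(u_\varepsilon,v_\varepsilon), \circ(u,v)\big) \leq L\big(d(u_\varepsilon,u)+d(v_\varepsilon,v)\big) + \sup d(\circ_\varepsilon,\circ),
\end{align*}
and both terms on the right tend to zero. With this in hand, Lemma \ref{lemma_limit_approximate_equality} applies to each pair $(P_\varepsilon,Q_\varepsilon)$ and gives $P=Q$ exactly.

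Next I run through the hierarchy in order, each case building on the previous ones.
\begin{itemize}
\item Associativity gives the semigroup case, by the Collapse of Similarity Semigroup.
\item Adding the identity axioms gives monoids.
\item Adding the inverse axioms gives groups.
\item Additive and multiplicative structure together with distributivity give semirings and rings.
\item Multiplicative inverses on nonzero elements give fields.
\item Scalar distributivity gives modules and vector spaces.
\item Bilinearity, antisymmetry and Jacobi give Lie algebras.
\end{itemize}
Each case is the corresponding earlier proposition, applied to every axiom rather than only the one displayed there. For the Lie group case, I would invoke Proposition \ref{proposition_collapse_lie_group}, which rests on Lemma \ref{lemma_C1_limit_lie_group}, under the additional $C^1_{\mathrm{loc}}$ hypothesis. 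That proposition also supplies the final assertion $T_{e_\varepsilon}G_\varepsilon \to \mathfrak{g}$, because the differentials of $m_\varepsilon$ at $(e_\varepsilon,e_\varepsilon)$ converge and the bracket depends continuously on second-order data of the multiplication near the identity.

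I expect the main obstacle in two places where uniform $C^0$ convergence of identities is not enough.
\begin{itemize}
\item \textbf{Field case.} The limit must still be a field: the nonzero set $A\setminus\{0_\varepsilon\}$ moves with $\varepsilon$, and $0 \neq e$ must survive the limit. I would handle this by noting that for each fixed $x \neq 0$, eventually $x \neq 0_\varepsilon$ because $0_\varepsilon \to 0$. Hence the inverse axiom applies to $x$ for all small $\varepsilon$. For $0\neq e$ I would assume it as part of "the corresponding classical structure", or derive it from $d(0_\varepsilon,e_\varepsilon)$ staying bounded below.
\item \textbf{Lie bracket convergence.} The bracket is really determined by second derivatives of $m$ at $(e,e)$. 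Only $C^1$ convergence is assumed, so I would try first to get bracket convergence from the $C^1$ convergence of the adjoint-type maps $\iota_\varepsilon$ and $m_\varepsilon$ composed along conjugation. Failing that, I would strengthen the hypothesis to $C^2_{\mathrm{loc}}$ in this step.
\end{itemize}
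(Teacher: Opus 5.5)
Your plan follows the paper's route. The paper gives no separate proof of Theorem~\ref{theorem_collapse}; it presents it as the union of the preceding collapse propositions. Each of those feeds one approximate identity into Lemma~\ref{lemma_limit_approximate_equality}, and the Lie group case and the tangent-space claim come from Proposition~\ref{proposition_collapse_lie_group} via Lemma~\ref{lemma_C1_limit_lie_group}. Your composition lemma fills a real hole. The paper applies Lemma~\ref{lemma_limit_approximate_equality} to composite expressions such as $(x*_\varepsilon y)*_\varepsilon z$ while only knowing that $*_\varepsilon$ itself converges; with the uniform Lipschitz bound, even pointwise convergence of the operations suffices. Your remark that a fixed $x\neq 0$ eventually lies in $A\setminus\{0_\varepsilon\}$ is also needed and is absent from the paper. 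One caveat you inherit from the paper: applying the argument ``to every axiom'' only yields the axioms the definitions actually list. The similarity ring lists only left distributivity, and the similarity module lists only $r\cdot_\varepsilon(x+_\varepsilon y)\approx_\varepsilon (r\cdot_\varepsilon x)+_\varepsilon(r\cdot_\varepsilon y)$. To land in a classical ring or module, you must read those definitions as containing approximate versions of all the classical axioms.

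On your two obstacles:

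\textbf{The field case.} The condition $0\neq e$ needs no extra hypothesis. $A\setminus\{0_\varepsilon\}$ contains $e_\varepsilon$, so $|A|\geq 2$. In the limit ring, $x*0=x*(0+0)=x*0+x*0$ forces $x*0=0$, so $e=0$ would give $x=x*e=0$ for every $x\in A$.

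\textbf{The Lie bracket.} Your worry is justified, and the fault lies in Step~5 of the proof of Proposition~\ref{proposition_collapse_lie_group}. That step asserts the bracket depends continuously on first derivatives of $m$, but the commutator of the fields $X^L(g)=D_2m(g,e)X$ involves $D_1D_2m(e,e)$. Your first route via $\mathrm{Ad}$ cannot rescue this: recovering $\mathrm{ad}$ from $C^0$ information about $\mathrm{Ad}$ uses genuine one-parameter subgroups and the homomorphism property, which approximate laws lack. In fact no $C^1$ argument can work. On $\mathbb{R}^2$ take $m_\varepsilon(x,y)=x+y+\chi(x/\varepsilon)\chi(y/\varepsilon)B(x,y)$, with $B$ a small antisymmetric bilinear map and $\chi$ a bump function equal to $1$ near $0$, together with $e_\varepsilon=0$ and $\iota_\varepsilon(x)=-x$. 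Then:
\begin{itemize}
\item identity and inverse hold exactly;
\item associativity holds up to $O(\varepsilon^2)$;
\item $m_\varepsilon\to +$ in $C^1$;
\item yet near $0$ one has $X^L_\varepsilon(g)=X+B(g,X)$, so the bracket at $e_\varepsilon$ is $2B(X,Y)$ (up to sign convention) for every $\varepsilon$, while the limit group is abelian.
\end{itemize}
So you have two options. You can read $T_{e_\varepsilon}G_\varepsilon\to\mathfrak{g}$ weakly: the tangent spaces converge and the limit carries the bracket of the $C^1$ Lie group produced by Lemma~\ref{lemma_C1_limit_lie_group}. Or you can adopt your $C^2_{\mathrm{loc}}$ hypothesis, under which $D_1D_2m_\varepsilon(e_\varepsilon,e_\varepsilon)$ converges and the stronger bracket-convergence statement holds.
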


\section{Practical Examples for Similarity Algebra}\label{section_practical_examples}

While the preceding sections established the axiomatic and categorical foundations of similarity algebra, its primary utility lies in its ability to model systems where exact arithmetic is either unavailable or undesirable. In many computational and empirical domains, operations do not follow rigid classical laws due to noise, numerical precision limits, or the inherent nature of the data. This section provides concrete examples of a similarity field and similarity Lie groups to showcase how similarity algebra can be used in practice. 

\subsection{A Practical Example of a Similarity Field}\leavevmode

We construct a simple example of a similarity field that collapses 
to the classical real numbers as $\varepsilon \to 0$.

\subsubsection*{Construction}

Consider the similarity field $\mathcal{F}_\varepsilon = (\mathbb{R}, +_\varepsilon, *_\varepsilon, -_\varepsilon, 0_\varepsilon, e_\varepsilon, (\cdot)_\varepsilon^{-1}, d, \varepsilon)$ for $\varepsilon \ge 0$.
Define perturbed addition and multiplication by:
\begin{align*}
x +_\varepsilon y 
&= x + y + \varepsilon xy,
\\
x *_\varepsilon y
&= xy + \varepsilon x^2 y.
\end{align*}

\subsubsection*{Approximate Field Properties}

Addition is approximately associative:
\begin{align*}
(x +_\varepsilon y) +_\varepsilon z
&= x + y + z
  + \varepsilon(xy + xz + yz)
  + O(\varepsilon^2),
\\
x +_\varepsilon (y +_\varepsilon z)
&= x + y + z
  + \varepsilon(xy + xz + yz)
  + O(\varepsilon^2).
\end{align*}
Associativity error is $O(\varepsilon^2)$, which in particular satisfies the $\varepsilon$-bound assumption for sufficiently small $\varepsilon$.

Multiplication is similarly approximately associative:
\begin{align*}
(x *_\varepsilon y) *_\varepsilon z
&= xyz + O(\varepsilon),
\\
x *_\varepsilon (y *_\varepsilon z)
&= xyz + O(\varepsilon).
\end{align*}
The distributive law holds up to order $\varepsilon$.

The additive identity is $0$, since:
\begin{align*}
x +_\varepsilon 0 &= x.
\end{align*}

The multiplicative identity is $1$ up to order $\varepsilon$, since:
\begin{align*}
x *_\varepsilon 1
&= x + \varepsilon x^2.
\end{align*}

\subsubsection*{Collapse to Classical Field}

As $\varepsilon \to 0$, we have:
\begin{align*}
x +_\varepsilon y &\to x+y,
\\
x *_\varepsilon y &\to xy,
\end{align*}
uniformly on compact subsets of $\mathbb{R}$.

Therefore, by the Collapse Theorem, 
$(F_\varepsilon, +_\varepsilon, *_\varepsilon)$ 
collapses to the classical field $(\mathbb{R},+,\times)$.

\subsubsection*{Interpretation}

This example shows that similarity fields model controlled nonlinear 
perturbations of classical algebraic structures. 
As the similarity parameter $\varepsilon$ vanishes, 
the classical field structure is recovered exactly.

\subsection{A Practical Example of Perturbed Matrix Lie Groups}\leavevmode

Here, we construct an explicit family of similarity Lie groups that converge to a classical Lie group.

\subsubsection*{Construction}

Consider the similarity Lie group $\mathfrak{L}_\varepsilon = (\mathrm{GL}(n,\mathbb{R}), \boldsymbol{\cdot}_\varepsilon, e_\varepsilon, (\cdot)_\varepsilon^{-1}, d)$ as a smooth manifold for $\varepsilon \geq 0$.

Define a perturbed matrix multiplication operation by:
\begin{align*}
\b{A} *_\varepsilon \b{B} 
&= \b{AB} + \varepsilon\, \Phi(\b{A},\b{B}).
\end{align*}

Here $\Phi : \mathrm{GL}(n,\mathbb{R}) \times \mathrm{GL}(n,\mathbb{R}) 
\to M_n(\mathbb{R})$ is a fixed $C^1$ bilinear map satisfying:
\begin{align*}
&\lVert\Phi(\b{A},\b{B})\rVert \leq C \lVert \b{A} \rVert \lVert \b{B} \rVert, \\
&\Phi(\b{I},\b{A}) = \Phi(\b{A},\b{I}) = 0,
\end{align*}
for some constant $C>0$, where $\b{I}$ is the identity matrix. 

We define the perturbed identity element as $e_\varepsilon = I$.
The inversion map $\iota_\varepsilon$ is defined implicitly by solving:
\begin{align*}
\b{A} *_\varepsilon \b{A}^{-1}_\varepsilon &= \b{I}.
\end{align*}

\subsubsection*{Approximate Group Properties}

A direct computation shows that:
\begin{align*}
(\b{A} *_\varepsilon \b{B}) *_\varepsilon \b{C}
&= \b{A} *_\varepsilon (\b{B} *_\varepsilon \b{C}) + O(\varepsilon).
\end{align*}
Thus, associativity holds up to order $\varepsilon$.

Similarly:
\begin{align*}
\b{A} *_\varepsilon \b{I} &= \b{A} + O(\varepsilon), \\
\b{I} *_\varepsilon \b{A} &= \b{A} + O(\varepsilon).
\end{align*}
Therefore $(G_\varepsilon, *_\varepsilon)$ defines a similarity Lie group.

\subsubsection*{Collapse to Classical Matrix Lie Group}

As $\varepsilon \to 0$, we have:
\begin{align*}
\b{A} *_\varepsilon \b{B} \rightarrow \b{AB},
\end{align*}
uniformly on compact subsets.

Since $\Phi$ is $C^1$, the multiplication map 
$m_\varepsilon(\b{A},\b{B})=\b{A} *_\varepsilon \b{B}$
converges to classical multiplication in $C^1_{\mathrm{loc}}$.

By Proposition \ref{proposition_collapse_lie_group}, the structure collapses to the classical Lie group
$\mathrm{GL}(n,\mathbb{R})$.

\subsubsection*{Collapse of the Lie Algebra}

The Lie bracket induced by the perturbed multiplication satisfies:
\begin{align*}
[\b{X},\b{Y}]_\varepsilon
&= \b{XY} - \b{YX} + O(\varepsilon).
\end{align*}
Thus, in the limit $\varepsilon \to 0$, we recover the classical Lie algebra $\mathfrak{gl}(n,\mathbb{R})$ with bracket:
\begin{align*}
[\b{X},\b{Y}] &= \b{XY} - \b{YX}.
\end{align*}

\subsubsection*{Interpretation}

This example demonstrates that similarity Lie groups describe small structured perturbations of classical Lie groups. Proposition \ref{proposition_collapse_lie_group} guarantees stability of the Lie group structure under controlled $C^1$ perturbations.

\section{Category of Similarity Algebra}\label{section_category}

While the preceding sections detailed the internal axiomatic requirements of individual similarity structures, a comprehensive mathematical framework requires an investigation into how these structures relate to one another. In this section, we formalize the \textit{Category of Similarity Algebra} to study the global properties of approximate algebraic systems. By defining appropriate morphisms---mappings that preserve the $\varepsilon$-bounded operations between objects---we extend the principles of category theory to the similarity domain. This categorical perspective allows us to analyze structural transformations and universal properties within a metric-constrained environment. Furthermore, we explore how the functors between similarity categories behave under the limit $\varepsilon \to 0$, providing a macroscopic view of the collapse from similarity categories to classical algebraic categories.

\begin{definition}[Similarity Function]
The similarity function $s: A \times A \rightarrow [0,1]$ is defined as:
\begin{align}
[0, 1] \ni s(x,y) := 1 - d(x,y),
\end{align}
assuming that $d(x,y) \in [0,1]$\footnote{If the metric $d(x,y)$ is unbounded, we can alternatively define:
\begin{align}
[0, 1] \ni s(x,y) := \frac{1}{1 + d(x,y)}.
\end{align}
}. 
Therefore, we have:
\begin{align}
d(x, y) \leq \varepsilon \quad\iff\quad s(x,y) \geq 1-\varepsilon,  
\end{align}
where larger $s(x,y)$ implies more similarity. 
\end{definition}

\begin{definition}[Structure-Preserving Function]
A function $f: A \rightarrow B$ is structure-preserving if:
\begin{align}
s_A(x,y) \leq s_B(f(x), f(y)),
\end{align}
where $s_A$ and $s_B$ are similarity scores in sets $A$ and $B$, respectively. In other words, similarity is not destroyed by mapping of a structure-preserving function. 
\end{definition}

\begin{definition}[Approximated Homomorphism]
Under the approximate homomorphism $f(\cdot)$, the following approximated operations are preserved up to similarity:
\begin{align}
& s_B(f(x +_\varepsilon y), f(x) +_\varepsilon f(y)) \geq 1 - \varepsilon, \label{equation_approximate_homomorphism_plus} \\
& s_B(f(x *_\varepsilon y), f(x) *_\varepsilon f(y)) \geq 1 - \varepsilon. \label{equation_approximate_homomorphism_multiply}
\end{align}
\end{definition}

\begin{definition}[Similarity-Algebra Morphism]
Let:
\begin{align}
\mathcal{A} := (A, +_\varepsilon, *_\varepsilon, s_A), \quad \mathcal{B} := (B, +_\varepsilon, *_\varepsilon, s_B).
\end{align}
A similarity-algebra morphism:
\begin{align}
f: \mathcal{A} \rightarrow \mathcal{B},
\end{align}
is a function $f: A \rightarrow B$ such that:
\begin{itemize}\label{equation_strucutre_preserving}
\item $s_A(x,y) \leq s_B(f(A), f(B))$, and
\item operations are preserved up to similarity
\end{itemize}
This definition is elementwise and not objectwise.
\end{definition}

\begin{lemma}[Identity Morphism Exists in Similarity Algebra]\label{lemma_identity_morphism}
Take any similarity algebra $\mathcal{A} = (A, +_\varepsilon, *_\varepsilon, s_A)$ where $s_A = 1 - d_A$. Define:
\begin{align}
\text{id}_A: A \rightarrow A, \quad \text{id}_A(x) = x.
\end{align}
The identity morphism exists in similarity algebra. 
\end{lemma}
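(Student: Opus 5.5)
The plan is to check directly that $\text{id}_A$ meets the two requirements in the definition of a similarity-algebra morphism, taking $\mathcal{B} = \mathcal{A}$, $s_B = s_A$, and using the same operations $+_\varepsilon, *_\varepsilon$ on both sides. Since the definition is elementwise, it is enough to verify the conditions pointwise for arbitrary $x,y \in A$.

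\textbf{Step 1 (structure preservation).} For any $x,y\in A$ we have $s_A(\text{id}_A(x),\text{id}_A(y)) = s_A(x,y)$. Hence the inequality $s_A(x,y)\le s_A(\text{id}_A(x),\text{id}_A(y))$ holds with equality. So $\text{id}_A$ is structure-preserving in the sense of the definition of a structure-preserving function.

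\textbf{Step 2 (operations preserved up to similarity).} For addition, $\text{id}_A(x+_\varepsilon y) = x+_\varepsilon y = \text{id}_A(x)+_\varepsilon \text{id}_A(y)$. Since $d_A$ is a metric, $d_A(z,z)=0$, so $s_A(z,z) = 1 - d_A(z,z) = 1 \ge 1-\varepsilon$ because $\varepsilon\ge 0$. This gives Eq. (\ref{equation_approximate_homomorphism_plus}). The same computation with $*_\varepsilon$ in place of $+_\varepsilon$ gives Eq. (\ref{equation_approximate_homomorphism_multiply}). If the similarity algebra carries further operations $\circ_i^\varepsilon$, the identical argument applies to each of them: the two sides coincide, their distance is zero, and the similarity is $1$.

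\textbf{Step 3 (identity laws).} For completeness, I would also note that for any morphism $f:\mathcal{A}\to\mathcal{B}$ we have $f\circ \text{id}_A = f$ and $\text{id}_B\circ f = f$ as functions. So $\text{id}_A$ behaves as the categorical identity, and the statement follows.

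The main obstacle is not computational but definitional. The morphism definition loosely writes $s_B(f(A),f(B))$, which I will read as $s_B(f(x),f(y))$. Also, if the bounded case $d\in[0,1]$ is not assumed, one should use the footnote's $s=1/(1+d)$ instead. In that case the same argument works, since $s(z,z)=1$ still holds.
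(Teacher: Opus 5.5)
Your proposal is correct and follows the same route as the paper's proof, which checks similarity preservation as $s_A(x,y)\le s_A(x,y)$ and operation preservation via $\text{id}_A(x+_\varepsilon y)=x+_\varepsilon y$. Your version is more complete than the paper's on three points the paper leaves implicit: you spell out that $s_A(z,z)=1\ge 1-\varepsilon$, you treat $*_\varepsilon$ and any further operations, and you verify the unit laws $f\circ\text{id}_A=f$ and $\text{id}_B\circ f=f$.
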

\begin{proof}
We check the following conditions:
\begin{itemize}
\item Similarity:
\begin{align}
s_A(x,y) \leq s_A(x,y).
\end{align}
\item Operations:
\begin{align}
\text{id}(x +_\varepsilon y) = x +_\varepsilon y.
\end{align}
\end{itemize}
So, identity morphism exists in similarity algebra.
\end{proof}

\begin{lemma}[Composition of Morphisms is Closed Under Similarity Algebra]\label{lemma_composition_of_morphisms}
Consider the following composition of morphisms: 
\begin{align}
\mathcal{A} \overset{f}{\rightarrow} \mathcal{B} \overset{g}{\rightarrow} \mathcal{C}.
\end{align}
Define:
\begin{align}
(g \circ f)(x) = g(f(x)).
\end{align}
Composition of morphisms is closed under similarity algebra.
\end{lemma}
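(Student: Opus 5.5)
The plan is to verify directly that $g\circ f$ satisfies the two conditions in the definition of a similarity-algebra morphism: preservation of similarity, and preservation of the operations up to similarity. Throughout I use $s=1-d$ (or $s=1/(1+d)$ in the unbounded case). The key observation is that in both cases the structure-preserving condition $s_A(x,y)\le s_B(f(x),f(y))$ is equivalent to $f$ being $1$-Lipschitz, i.e.\ $d_B(f(x),f(y))\le d_A(x,y)$, because $s$ is a strictly decreasing function of $d$.

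\emph{Similarity preservation} is a two-step chain. For all $x,y\in A$,
\begin{align*}
s_A(x,y)\le s_B(f(x),f(y))\le s_C\big(g(f(x)),g(f(y))\big),
\end{align*}
where the first inequality uses the condition on $f$ and the second applies the condition on $g$ to the points $f(x),f(y)\in B$. This is the same argument as in Lemma \ref{lemma_identity_morphism}, iterated once.

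\emph{Operation preservation.} I translate the similarity bounds (\ref{equation_approximate_homomorphism_plus}) and (\ref{equation_approximate_homomorphism_multiply}) into metric form, $d_B(f(x+_\varepsilon y),f(x)+_\varepsilon f(y))\le\varepsilon$, and similarly for $g$. The triangle inequality in $C$ then gives
\begin{align*}
d_C\big(g(f(x+_\varepsilon y)),\,g(f(x))+_\varepsilon g(f(y))\big)
&\le d_C\big(g(f(x+_\varepsilon y)),\,g(f(x)+_\varepsilon f(y))\big)\\
&\quad+ d_C\big(g(f(x)+_\varepsilon f(y)),\,g(f(x))+_\varepsilon g(f(y))\big).
\end{align*}
The first term is at most $d_B(f(x+_\varepsilon y),f(x)+_\varepsilon f(y))\le\varepsilon$, by the $1$-Lipschitz property of $g$. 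The second term is at most $\varepsilon$, by the homomorphism bound for $g$ applied to $f(x),f(y)$. The multiplication $*_\varepsilon$ is handled identically.

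The main obstacle is that this argument yields the bound $2\varepsilon$, not $\varepsilon$. With a literally fixed tolerance, the composite is therefore only a morphism at tolerance $2\varepsilon$. I would resolve this by reading ``operations are preserved up to similarity'' as ``up to a tolerance that vanishes with $\varepsilon$,'' which is in keeping with the collapse philosophy of Section \ref{section_collapse}. Under this reading, tolerances add under composition: $\varepsilon_{g\circ f}\le\varepsilon_f+\varepsilon_g$, and the sum still tends to $0$. I would state this explicitly as the precise sense in which composition is closed.

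Finally, I would note that associativity of composition is inherited from that of ordinary function composition. Together with Lemma \ref{lemma_identity_morphism}, this gives the category structure.
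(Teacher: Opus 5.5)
Your argument is the paper's argument. Its Step 1 is exactly your chain $s_A(x,y)\le s_B(f(x),f(y))\le s_C(g(f(x)),g(f(y)))$. Its Step 2 is the same two-link chain $g(f(x+y))\approx_C g(f(x)+f(y))\approx_C g(f(x))+g(f(y))$: the first link rests on similarity preservation of $g$ (the paper leaves this implicit, you make it explicit), and the second is the homomorphism bound for $g$. The one real difference is the conclusion. The paper infers that $g\circ f$ satisfies (\ref{equation_approximate_homomorphism_plus}) at the same tolerance, i.e.\ it silently treats $\approx_\varepsilon$ as transitive. You track the constants and get $2\varepsilon$.

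Your bookkeeping is the correct one, and the loss is genuine, not an artifact of the estimate. Take $\mathbb{R}$ with $d(x,y)=\min\{|x-y|,1\}$, $s=1-d$, exact addition (and the zero product if you want a second operation), and $f(x)=x-\varepsilon$. Then $f$ is an isometry with $s(f(x+y),f(x)+f(y))=1-\varepsilon$, so $f$ is a morphism. However, $s\big((f\circ f)(x+y),(f\circ f)(x)+(f\circ f)(y)\big)=1-2\varepsilon<1-\varepsilon$ for $0<\varepsilon\le 1/2$. So for fixed $\varepsilon>0$ the lemma, read with the paper's definition (\ref{equation_approximate_homomorphism_plus}), is false, and the paper's Step 2 does not establish it; a repair like yours is required.

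What your argument actually proves is subadditivity of the defect: $\delta(g\circ f)\le\delta(f)+\delta(g)$ and $\delta(\mathrm{id})=0$. Here $\delta(f)$ is the supremum of $d_B(f(x+_\varepsilon y),f(x)+_\varepsilon f(y))$, and likewise for $*_\varepsilon$. This gives a category in either of two ways: filter the hom-sets by $\delta$, with composition adding degrees; or take morphisms to be families $f_\varepsilon$ with $\delta(f_\varepsilon)\to 0$, which is your reading. It does not give a category if hom-sets are cut off at the single tolerance $\varepsilon$. At $\varepsilon=0$ everything is exact, so the later embedding $\textbf{Alg}\hookrightarrow\textbf{SimAlg}_0$ is unaffected.

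One minor point: you do not need the reformulation as $1$-Lipschitzness, which requires $B$ and $C$ to use the same convention for $s$. Bound the first term directly by $s_C\big(g(f(x+_\varepsilon y)),g(f(x)+_\varepsilon f(y))\big)\ge s_B\big(f(x+_\varepsilon y),f(x)+_\varepsilon f(y)\big)\ge 1-\varepsilon$, and only then pass to $d_C$ for the triangle inequality.
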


\begin{proof}

We verify that $g \circ f$ preserves similarity and operations.

Step 1 --- Similarity preservation:
Since $f$ and $g$ are morphisms, they satisfy similarity monotonicity:
\begin{align}
s_A(x,y) \overset{(\ref{equation_strucutre_preserving})}{\leq} s_B(f(x), f(y)),
\end{align}
and:
\begin{align}
s_B(u,v) \leq s_C(g(u), g(v)).
\end{align}

Applying the second inequality with $u=f(x)$ and $v=f(y)$ gives:
\begin{align}
s_B(f(x), f(y)) \leq s_C(g(f(x)), g(f(y))).
\end{align}

Combining the inequalities,
\begin{align}
s_A(x,y)
\leq s_B(f(x), f(y))
\leq s_C(g(f(x)), g(f(y))).
\end{align}

Hence,
\begin{align}
s_A(x,y) \leq s_C\big((g\circ f)(x), (g\circ f)(y)\big),
\end{align}
so $g \circ f$ preserves similarity.

Step 2 --- Preservation of operations:
Suppose $f$ and $g$ preserve the algebraic operation $+_\varepsilon$ up to similarity:
\begin{align}
f(x+y) \approx_B f(x) + f(y),
\end{align}
and:
\begin{align}
g(u+v) \approx_C g(u) + g(v).
\end{align}

Then:
\begin{align*}
(g\circ f)(x+y)
&= g(f(x+y)) \approx_C g(f(x) + f(y)) \\
&\approx_C g(f(x)) + g(f(y)) = (g\circ f)(x) + (g\circ f)(y).
\end{align*}

Thus the composition preserves the operation up to similarity.

Since both similarity and operations are preserved, $g \circ f$ is a morphism in similarity algebra. Therefore, composition of morphisms is closed under similarity algebra.
\end{proof}

\begin{lemma}[Associativity of Composition Under Similarity Algebra]\label{lemma_associativity_of_composition}
there is associativity of composition under similarity algebra:
\begin{align}
h \circ (g \circ f) = (h \circ g) \circ f.
\end{align}
\end{lemma}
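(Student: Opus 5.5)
The plan is to reduce the statement to the fact that morphisms in the category of similarity algebra are, underneath, ordinary set-theoretic functions, and that composition is ordinary function composition. Let
\begin{align*}
\mathcal{A} \overset{f}{\rightarrow} \mathcal{B} \overset{g}{\rightarrow} \mathcal{C} \overset{h}{\rightarrow} \mathcal{D}
\end{align*}
be similarity-algebra morphisms. The proof then has three steps.

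\textbf{Step 1 (well-definedness).} By Lemma \ref{lemma_composition_of_morphisms}, the composites $g \circ f$ and $h \circ g$ are morphisms. Applying the same lemma once more shows that $h \circ (g \circ f)$ and $(h \circ g) \circ f$ are both morphisms $\mathcal{A} \rightarrow \mathcal{D}$. So both sides of the claimed identity live in the same hom-set, and the equation makes sense.

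\textbf{Step 2 (pointwise equality).} Both sides are functions $A \rightarrow D$, so we compare them elementwise. For any $x \in A$, unfolding the definition $(g\circ f)(x) = g(f(x))$ twice gives
\begin{align*}
\big(h \circ (g \circ f)\big)(x) = h\big((g\circ f)(x)\big) = h(g(f(x))) = (h\circ g)(f(x)) = \big((h \circ g) \circ f\big)(x).
\end{align*}
Two functions with the same domain and codomain that agree at every point are equal. This gives the identity exactly, not merely up to $\varepsilon$.

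\textbf{Step 3 (equality as morphisms).} We must check that "equality of morphisms" carries no extra data beyond the underlying map. The definition of a similarity-algebra morphism is a property of a function $f : A \rightarrow B$: similarity monotonicity plus $\varepsilon$-preservation of the operations. It carries no additional structure. Hence equality of morphisms is equality of the underlying functions, and Step 2 suffices.

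The main obstacle is conceptual rather than computational. One must not conflate this exact associativity of composition with the approximate preservation of operations, where errors $\varepsilon$ accumulate under composition, as the chain of two $\approx_C$ steps in Lemma \ref{lemma_composition_of_morphisms} shows. I will note explicitly that this tolerance degradation affects only whether a composite is a morphism, which Step 1 handles. It does not affect the equality of the two composites, which holds on the nose.
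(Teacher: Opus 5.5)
Your proposal is correct and is essentially the paper's proof. Both fix $\mathcal{A}\xrightarrow{f}\mathcal{B}\xrightarrow{g}\mathcal{C}\xrightarrow{h}\mathcal{D}$ and unfold $(g\circ f)(x)=g(f(x))$ so that each side becomes $h(g(f(x)))$; your Steps 1 and 3 only make explicit the paper's closing remark that composites are again morphisms and that a morphism is nothing more than its underlying function. Your caveat is apt but concerns Lemma \ref{lemma_composition_of_morphisms}, not this lemma: with $s=1-d$, the monotonicity condition makes $g$ non-expansive, so the argument there only yields preservation of operations up to $2\varepsilon$, and your Step 1 therefore defers that issue rather than resolving it, while the equality you prove holds exactly either way.
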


\begin{proof}
Let:
\begin{align}
\mathcal{A} \xrightarrow{f} \mathcal{B}
\xrightarrow{g} \mathcal{C}
\xrightarrow{h} \mathcal{D}
\end{align}
be morphisms in similarity algebra.

By definition of composition:
\begin{align}
(g \circ f)(x) &= g(f(x)), \\
(h \circ g)(y) &= h(g(y)).
\end{align}

We prove equality pointwise. For any $x \in \mathcal{A}$,
\begin{align*}
\big(h \circ (g \circ f)\big)(x)
&= h\big((g \circ f)(x)\big) = h\big(g(f(x))\big),
\end{align*}
and:
\begin{align*}
\big((h \circ g) \circ f\big)(x)
&= (h \circ g)\big(f(x)\big) = h\big(g(f(x))\big).
\end{align*}

Thus:
\begin{align}
\big(h \circ (g \circ f)\big)(x)
=
\big((h \circ g) \circ f\big)(x)
\end{align}
for all $x \in \mathcal{A}$.
Therefore:
\begin{align}
h \circ (g \circ f) = (h \circ g) \circ f.
\end{align}

Since both sides define the same mapping and composition of morphisms preserves similarity and operations, associativity holds in similarity algebra.
\end{proof}

According to Lemmas \ref{lemma_identity_morphism}, \ref{lemma_composition_of_morphisms}, and \ref{lemma_associativity_of_composition}, there is a category for similarity algebra, denoted by $\textbf{SimAlg}_\varepsilon$ \cite{mac1998categories,riehl2017category}. 

\begin{theorem}[Relation of Categories of Similarity and Classical Algebras]
Classical Algebra is a full subcategory of similarity algebra. In fact, the category of classical algebra faithfully and fully embeds into the category of similarity algebra with $\varepsilon = 0$:
\begin{align}
\textbf{Alg} \hookrightarrow \textbf{SimAlg}_0,
\end{align}
where $\textbf{Alg}$ denotes the category of classical algebra. 
\end{theorem}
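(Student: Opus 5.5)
The plan is to construct an explicit functor $\iota: \textbf{Alg} \to \textbf{SimAlg}_0$ and check that it is faithful and full. On objects, a classical algebra $(A, +, *)$ has no metric, so I will equip $A$ with the discrete metric $d_A(x,y) = 1$ for $x \neq y$ and $d_A(x,x)=0$. Then $s_A = 1 - d_A$ takes values in $\{0,1\}$, and $s_A(x,y)=1$ exactly when $x=y$. I set $\iota(A,+,*) := (A, +_0, *_0, s_A)$ with $+_0 = +$ and $*_0 = *$. Every defining identity holds exactly, so each inequality $d(\cdot,\cdot)\le 0$ is satisfied. The Lipschitz condition holds with $L_i = 1$ for each $n_i$-ary operation: if the inputs differ, the right-hand side is at least $1 \geq d(\cdot,\cdot)$, and if they agree, both sides are $0$. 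Hence $\iota(A)$ is an object of $\textbf{SimAlg}_0$. On morphisms, a classical homomorphism $f$ is sent to the same set map, $\iota(f) := f$.

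Next I verify that $\iota(f)$ is a similarity-algebra morphism. For similarity preservation, note that $s_A(x,y) \le s_B(f(x),f(y))$ is trivial when $x\neq y$, since then $s_A = 0$. When $x=y$, both sides equal $1$. For operation preservation with $\varepsilon=0$, I need $s_B(f(x+y), f(x)+f(y)) \ge 1$, which is exactly $f(x+y)=f(x)+f(y)$, and this holds because $f$ is a homomorphism; multiplication is handled the same way. Functoriality is immediate, because $\iota$ does not change underlying maps: identities go to the identity morphisms of Lemma \ref{lemma_identity_morphism}, and composites go to composites as in Lemma \ref{lemma_composition_of_morphisms}. Faithfulness also follows at once: distinct set maps stay distinct. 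Injectivity on objects up to the chosen metric is likewise clear.

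Fullness is the key step. Let $g:\iota(A)\to\iota(B)$ be any morphism in $\textbf{SimAlg}_0$. Taking $\varepsilon=0$ in (\ref{equation_approximate_homomorphism_plus}) gives $s_B(g(x+y), g(x)+g(y)) \ge 1$. This forces $d_B = 0$, so $g(x+y)=g(x)+g(y)$ exactly, and the same argument applies to $*$. The same Lemma \ref{lemma_limit_approximate_equality}-style reasoning shows that $g$ preserves any constants, such as $0$ and $e$, provided the morphism definition is read as preserving all operations, including nullary ones. So $g$ is a classical homomorphism and $g = \iota(g)$.

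I expect the main obstacle to be definitional rather than computational. The morphism definition in the paper only says operations are ``preserved up to similarity'', so I must fix the reading that every operation in the signature, including units and inverses, satisfies an analogue of (\ref{equation_approximate_homomorphism_plus}). With $\varepsilon=0$ this yields exact preservation. I will also remark that \emph{full subcategory} is meant in the sense of the essential image: the choice of the discrete metric is what makes the similarity condition vacuous. Under any other metric, fullness still holds, but the similarity-monotonicity condition could exclude some classical homomorphisms. That is the reason to fix the discrete metric in the construction.
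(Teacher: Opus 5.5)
Your proposal is correct and follows the paper's proof: discrete similarity with $\varepsilon=0$, a functor that is the identity on underlying maps, faithfulness for free, and fullness from $s_B(u,v)\ge 1 \Rightarrow u=v$. Your additional checks cover points the paper's argument skips: similarity monotonicity of homomorphisms, the Lipschitz axiom for the objects, and the caveat about nullary operations in the morphism definition. Invoking Lemma~\ref{lemma_limit_approximate_equality} for constants is unnecessary, though, since discreteness of $s_B$ alone already gives exact preservation.
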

\begin{proof}
Let similarity be a discrete similarity (i.e., equality encoded as similarity), as in classical algebra:
\begin{align}\label{equation_similarity_discrete}
s_A(x,y) = 
\left\{
    \begin{array}{ll}
        1 & \mbox{if } x=y, \\
        0 & \mbox{if } x\neq y,
    \end{array}
\right.
\end{align}
and let $\varepsilon = 0$ to have classical algebra. 
Then approximate laws become exact. 

Define the embedding functor:
\begin{align}
F: \textbf{Alg} \rightarrow \textbf{SimAlg}_0,
\end{align}
on objects:
\begin{align}
F(A, +, *) = (A, +, *, s_A),
\end{align}
and on morphisms $F(f) = f$, i.e., the functor does not modify the underlying function; it only reinterprets it as a morphism in another category.

Step 1 --- Showing that homomorphisms become similarity morphisms:

Let $f: A \rightarrow B$ be a homomorphism. We must check the similarity-morphism conditions.
\begin{itemize}
\item Operation preservation: As $f$ is a homomorphism:
\begin{align*}
f(x+y) = f(x) + f(y).
\end{align*}
Therefore:
\begin{align*}
s_B(f(x+y), f(x)+f(y)) = 1 \geq 1 - 0,
\end{align*}
satisfying Eq. (\ref{equation_approximate_homomorphism_plus}). Likewise, it can be proved for multiplication satisfying Eq. (\ref{equation_approximate_homomorphism_multiply}). 
Thus, every homomorphism is a similarity-algebra morphism.
\end{itemize}

Step 2 --- Faithfulness of the embedding:

Suppose $F(f) = F(g)$. Then $f(x) = g(x), \forall x$. So, $f=g$. Hence $F$ is faithful (injective on morphisms).

Step 3 --- Fullness of the embedding:

Let:
\begin{align}
f: F(A) \rightarrow F(B),
\end{align}
be a morphism in $\textbf{SimAlg}_0$.
Then:
\begin{align*}
s_B(f(x+y), f(x)+f(y)) = 1 \geq 1 - 0,
\end{align*}
satisfying Eq. (\ref{equation_approximate_homomorphism_plus}).
Since $s_B$ is discrete (defined in Eq. (\ref{equation_similarity_discrete})):
\begin{align*}
s_B(u,v) \geq 1 \quad\implies\quad u=v.
\end{align*}
Therefore:
\begin{align*}
f(x+y) = f(x)+f(y).
\end{align*}
It can be proved likewise for multiplication. 
Therefore, $f$ is a classical homomorphism.
Hence $F$ is full.

Step 4 --- Conclusion: We have shown that $F$ is a functor, it is full, and it is faithful. Therefore, $\textbf{Alg}$ is a full and faithful subcategory of $\textbf{SimAlg}_0$.
\end{proof}

The above theorem shows that similarity-algebra morphisms are homomorphisms when similarity becomes equality. Moreover, classical algebra embeds fully and faithfully into the similarity algebra. This also coincides with Theorem \ref{theorem_collapse}.

\section{Comparison of Similarity Algebra and Fuzzy Algebra}\label{section_fuzzy}

The study of non-exact mathematical structures has a long history, largely initiated after the development of fuzzy sets and fuzzy logic \cite{zadeh1965fuzzy,klir1995fuzzy}. Following these foundations, the field of \textit{fuzzy algebra} emerged \cite{mordeson2005fuzzy,mordeson2001fuzzy}, leading to the formalization of diverse fuzzy algebraic structures \cite{kandel1978fuzzy} such as fuzzy groups \cite{rosenfeld1971fuzzy,mordeson2005fuzzy}, fuzzy rings \cite{kumar1991fuzzy}, fuzzy vector spaces \cite{kumar1992fuzzy}, and fuzzy Lie algebras \cite{akram2018fuzzy}. While some literature has addressed the reduction or collapse of fuzzy algebraic structures to their classical counterparts \cite{weinberger2005reducing}, our proposed framework diverges significantly in its underlying metric-based logic. 

It should be noted that certain studies, such as \cite{schmitt2004similarity, hajdinjak2012extending}, employ the term ``similarity algebra'' to describe specific applications within fuzzy relational calculus; however, their definitions are different from the axiomatic and categorical similarity algebra framework proposed in the present work. Whereas fuzzy algebra relies on membership functions to quantify the degree of truth, our framework utilizes a metric $d$ and a tolerance parameter $\epsilon$ to govern the approximate satisfaction of axioms. In this section, we provide a comparative analysis of these two approaches, demonstrating that, in fact, similarity algebra generalizes fuzzy algebra. 

% After fuzzy sets and fuzzy logic were developed \cite{zadeh1965fuzzy,klir1995fuzzy}, fuzzy algebra \cite{mordeson2005fuzzy,mordeson2001fuzzy} appeared.
% Many fuzzy algebraic structures \cite{kandel1978fuzzy}, such as fuzzy group \cite{rosenfeld1971fuzzy,mordeson2005fuzzy}, fuzzy rings \cite{kumar1991fuzzy}, fuzzy vector spaces \cite{kumar1992fuzzy}, and fuzzy Lie algebra \cite{akram2018fuzzy} have been developed. 
% Some works have also discussed collapse of fuzzy algebra to classical algebra \cite{weinberger2005reducing}.
% It should be noted that certain studies, such as \cite{schmitt2004similarity,hajdinjak2012extending}, employ the term ``similarity algebra'' to describe fuzzy algebra; however, their definition is different from the similarity algebra framework proposed in the present work.

% \subsection{Similarity Algebra Generalizes Fuzzy Algebra} \leavevmode 

% We can formally embed fuzzy algebra into similarity algebra via a metric representation. This shows that similarity algebra generalizes fuzzy algebra. This is explained in the following. 

Let $A$ be a universal set and $\mu: A \rightarrow [0,1]$ be a membership function.
A fuzzy subset $A_\mu$ represents graded belonging of elements.
A fuzzy algebraic structure (e.g., fuzzy group, fuzzy ring) replaces crisp membership with $\mu$, and its axioms are expressed via inequalities involving t-norms (denoted by $T$) and t-conorms (denoted by $S$).

\begin{theorem}[Fuzzy-to-Similarity Embedding]\label{theorem_fuzzy_similarity_embedding}
Let $(A, \mu)$ be a fuzzy set. 
Define:
\begin{align}
&s(a,b) := T(\mu(a), \mu(b)), \label{equation_s_T_fuzzy} \\
&d(a,b) := 1 - s(a,b). \label{equation_d_s_fuzzy}
\end{align}
Then every fuzzy algebra satisfying Rosenfeld's fuzzy group axioms \cite{rosenfeld1971fuzzy} corresponds to a similarity semi-group with $\varepsilon$ determined by $1 - T(\mu(a), \mu(b), \mu(c))$.
\end{theorem}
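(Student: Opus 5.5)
We take the similarity semigroup on $A$ to be $(A, *, d, \varepsilon)$, with the fuzzy group's crisp operation $*$ used as $*_\varepsilon$ for every $\varepsilon$. The plan is to check each defining condition of a similarity semigroup in turn, with the tolerance read off from the membership values.

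First we record Rosenfeld's axioms: for a fuzzy subgroup $\mu$ of a group $(A,*)$,
\begin{align*}
\mu(x*y) \geq T(\mu(x),\mu(y)), \qquad \mu(x^{-1}) \geq \mu(x).
\end{align*}
We also fix $T(a,b,c) := T(T(a,b),c)$, which is well defined by associativity of the t-norm. Closure then holds trivially, since $x*y\in A$ and we may take $z = x*y$. Associativity holds exactly in the underlying group, so $(x*y)*z = x*(y*z)$. The remaining point is that the quantity $d(w,w) = 1 - T(\mu(w),\mu(w))$ need not vanish. This tells us that $d$, as defined in (\ref{equation_d_s_fuzzy}), is really a graded ``defect'' rather than a metric.

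The approximate associativity estimate is therefore the key computation. Put $w = (x*y)*z = x*(y*z)$. Using monotonicity of $T$, the fact that $T(a,a)\geq T(a,b,c)$-type bounds fail in general, and Rosenfeld's axiom applied twice, we get
\begin{align*}
\mu(w) \geq T(T(\mu(x),\mu(y)),\mu(z)) = T(\mu(x),\mu(y),\mu(z)).
\end{align*}
For the minimum t-norm, $T(\mu(w),\mu(w)) = \mu(w)$, and hence
\begin{align*}
d\big((x*y)*z,\, x*(y*z)\big) \leq 1 - T(\mu(x),\mu(y),\mu(z)) =: \varepsilon(x,y,z).
\end{align*}
This is exactly the tolerance announced in the statement. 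For a general t-norm we would instead need $T(\mu(w),\mu(w)) \ge T(\mu(x),\mu(y),\mu(z))$, which is where idempotence (the $\min$ case) is used. Otherwise the tolerance must be enlarged to $1 - T(\cdot,\cdot,\cdot)$ evaluated at $\mu(w)$.

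The stability (Lipschitz) condition is the main obstacle. The function $d$ is not a genuine metric: it fails $d(x,x)=0$, and the triangle inequality is unclear. We would handle this by passing to the pseudo-metric quotient $\tilde d(a,b) = d(a,b)\,\mathbb{1}_{a\neq b}$, or by interpreting the statement pointwise, with $\varepsilon$ depending on the triple. Lipschitz stability with respect to the discrete part holds with $L=1$, since $\tilde d\le 1$ and $\tilde d$ vanishes on the diagonal. Finally, taking $\varepsilon := \sup_{x,y,z}\big(1 - T(\mu(x),\mu(y),\mu(z))\big)$ gives a uniform tolerance. In the crisp case $\mu\equiv 1$ this tolerance is $0$, which is consistent with the Collapse Theorem (Theorem \ref{theorem_collapse}).
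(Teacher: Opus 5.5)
Your associativity estimate is correct, and it takes a different and sharper route than the paper. The paper never uses the fact that $(a*b)*c$ and $a*(b*c)$ are the same element. It applies Rosenfeld's axiom to each side separately and combines the two bounds by monotonicity of $T$. This gives the tolerance $1-T\big(T(\mu(a*b),\mu(c)),\,T(\mu(a),\mu(b*c))\big)$, which is valid for every t-norm but is not the announced $1-T(\mu(a),\mu(b),\mu(c))$. The paper then disposes of closure, stability and the metric axioms with ``the other axioms follow analogously''.

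You instead collapse both sides to $w$, apply Rosenfeld twice and use idempotence. That yields exactly the announced tolerance for $T=\min$, which is Rosenfeld's original setting. Your caveat is necessary, not a weakness. Take the product t-norm on $\mathbb{Z}_2$ with $\mu(0)=1$, $\mu(1)=\tfrac12$, which is a fuzzy subgroup. For $x=1$, $y=z=0$ you get $d(w,w)=\tfrac34>\tfrac12=1-T(\mu(x),\mu(y),\mu(z))$.

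The genuine gap is in your final paragraph. Lipschitz stability of $\tilde d$ with $L=1$ does not follow from $\tilde d\le 1$ and $\tilde d(a,a)=0$. That argument needs $\tilde d$ to be bounded below off the diagonal, as the discrete metric is. But $\tilde d(x,x')=1-T(\mu(x),\mu(x'))$ can vanish for $x\neq x'$.

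Concretely, take $\mathbb{Z}_4$ with $\mu(0)=\mu(2)=1$, $\mu(1)=\mu(3)=0.1$ and $T=\min$. Then $\tilde d(0+1,2+1)=\tilde d(1,3)=0.9$, while $\tilde d(0,2)+\tilde d(1,1)=0$. So no $L$ works. The same example shows $\tilde d$ is not a metric either; in your crisp case $\mu\equiv1$ it is identically $0$.

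Worse, under $\tilde d$, or under any genuine metric, the associativity error is exactly $0$, because $*$ is exactly associative. So the $\mu$-dependent tolerance has content only for the non-metric $d$. Checking associativity with $d$ and stability with $\tilde d$ (or the discrete metric) mixes two distances, whereas a similarity semigroup uses a single one.

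Keep the original $d$ instead. For $T=\min$, set $m=\min\{\mu(x),\mu(x'),\mu(y),\mu(y')\}$. Rosenfeld gives $d(x*y,x'*y')\le 1-m\le d(x,x')+d(y,y')$, so $L=1$ works. Moreover $d$ is symmetric and satisfies the triangle inequality: if $\mu(a)\le\mu(c)$, then $d(a,c)=1-\mu(a)\le d(a,b)$.

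The one remaining defect is $d(a,a)=1-\mu(a)\neq 0$. This belongs to the statement itself, and it cannot be removed without trivializing the statement. You should say so explicitly rather than claim it has been repaired.
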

\begin{proof}
For fuzzy associativity:
\begin{align}
&\mu((a * b) * c) \geq T(\mu(a * b), \mu(c)), \label{equation_fuzzy_associativity_1} \\
&\mu(a * (b * c)) \geq T(\mu(a) * \mu(b, c)). \label{equation_fuzzy_associativity_2}
\end{align}
Define $s(x,y) := T(\mu(x), \mu(y))$. Then:
\begin{align*}
d((a * b) * c, a * (b * c)) &\overset{(\ref{equation_d_s_fuzzy})}{=} 1 - s((a * b) * c, a * (b * c)) \\
&\overset{(\ref{equation_s_T_fuzzy})}{=} 1 - T(\mu((a * b) * c), \mu(a * (b * c))) \\
&\overset{(\ref{equation_fuzzy_associativity_1}), (\ref{equation_fuzzy_associativity_2})}{\leq} 1 - T(T(\mu(a * b), \mu(c)), T(\mu(a) * \mu(b, c))).
\end{align*}
Hence, setting:
\begin{align}
\varepsilon = 1 - T(T(\mu(a * b), \mu(c)), T(\mu(a) * \mu(b, c))),
\end{align}
gives $d((a * b) * c, a * (b * c)) \leq \varepsilon$ as in Eq. (\ref{equation_d_f_g}). This gives we the approximate associativity with error $\varepsilon$.
The other axioms follow analogously.
\end{proof}

\begin{corollary}[Similarity Algebra Generalizes Fuzzy Algebra]\label{corollary_similarity_generalizes_fuzzy}
Every fuzzy algebra can be represented as a similarity algebra with:
\begin{align}
\varepsilon = 1 - T(\text{membership degrees of participating elements}), 
\end{align}
but the converse is not necessarily true because similarity may depend on pairwise geometry rather than individual membership.
\end{corollary}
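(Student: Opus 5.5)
The corollary has two parts: a forward representation claim and a non-converse claim. I will treat them separately.

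For the forward direction I will apply Theorem \ref{theorem_fuzzy_similarity_embedding} to each axiom of the fuzzy structure in turn. Given a fuzzy algebra $(A,\mu)$ with operations satisfying Rosenfeld-type inequalities, I equip $A$ with $s(a,b) := T(\mu(a),\mu(b))$ and $d := 1-s$ as in Eqs. (\ref{equation_s_T_fuzzy})--(\ref{equation_d_s_fuzzy}). For each defining identity $P(x_1,\dots,x_n) = Q(x_1,\dots,x_n)$ (associativity, identity, inverse, distributivity, and so on), the fuzzy axioms bound $\mu(P)$ and $\mu(Q)$ from below by iterated t-norms of $\mu(x_1),\dots,\mu(x_n)$. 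Monotonicity and associativity of $T$ then give
\begin{align*}
d(P,Q) \le 1 - T\big(\mu(x_1),\dots,\mu(x_n)\big).
\end{align*}
This is exactly the displayed $\varepsilon$. Taking $\varepsilon$ to be the supremum of these quantities over the finitely many axioms and over all tuples yields a single tolerance. The axioms are then satisfied in the sense of Eq. (\ref{equation_d_f_g}), so the structure is a similarity algebra.

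The main obstacle is that $d = 1 - T(\mu(a),\mu(b))$ is not a genuine metric. In general $d(a,a) = 1 - T(\mu(a),\mu(a)) \neq 0$, and the triangle inequality may fail. The Lipschitz requirement of a similarity algebra is also not automatic. My first attempt at a repair is to redefine $d(a,a) := 0$ and keep $1 - T$ off the diagonal. For the Łukasiewicz or minimum t-norm I would check the triangle inequality directly. If that fails, I would pass to the largest metric dominated by $d$, namely the path metric (infimum over chains). Since this metric is bounded above by $d$, every inequality obtained above is preserved. Lipschitz constants would be handled by enlarging $L_i$, using boundedness of $d$ by $1$: any map into a space of diameter at most $1$ is Lipschitz once distances are bounded below, and otherwise I restrict to the crisp support.

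For the non-converse, I will give one explicit counterexample. Take $A = \mathbb{R}$ with the usual metric and the similarity field of the practical example, with $x +_\varepsilon y = x+y+\varepsilon xy$. Suppose it came from a fuzzy set. Then $s(a,b) = T(\mu(a),\mu(b))$ would depend only on the individual membership values, so $s(a,a)$ and $s(a,b)$ could not separate points by Euclidean distance. Concretely, $1-|a-b|$ truncated to $[0,1]$ is not of the form $T(\mu(a),\mu(b))$. Indeed $s(a,a)=1$ forces $\mu(a)=1$ for all $a$, which gives $s\equiv 1$, a contradiction. This shows that pairwise geometry cannot be encoded by memberships, which completes the argument.
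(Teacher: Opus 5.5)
Your forward computation follows the paper's route exactly. The paper gives no separate proof of the corollary and reads it off from Theorem \ref{theorem_fuzzy_similarity_embedding}; it also offers no argument for the non-converse beyond the phrase in the statement.

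\textbf{Forward half.} You are right that $d=1-T(\mu(a),\mu(b))$ is not a metric, which the paper ignores, but your repair does not work.
\begin{itemize}
\item \emph{Separation fails.} Since $T(x,y)\le\min(x,y)$, we have $T(x,y)=1$ only when $x=y=1$. So $d$ vanishes between distinct points of $\{\mu=1\}$. For a crisp group viewed as a Rosenfeld fuzzy group ($\mu\equiv 1$) you get $d\equiv 0$. The path construction is dominated by $d$, so it is identically zero too. In general you only obtain a pseudometric.
\item \emph{Lipschitz fails even when $d$ is a genuine metric.} On $\mathbb{Z}$ take $T=\min$ and the Rosenfeld fuzzy subgroup $\mu(n)=1-2^{-v_2(n)}$, where $v_2$ is the $2$-adic valuation and $\mu(0)=1$. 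Then $d(2^k,2^{k+1})=2^{-k}$, while $d(1+2^k,1+2^{k+1})=1$ for $k\ge 1$. The path metric gives the same values, because any chain leaving $a$ costs at least $1-\mu(a)$. So translation by $1$ is not Lipschitz for any constant, distances are not bounded below (enlarging $L_i$ cannot help), and restricting to the support $2\mathbb{Z}$ reproduces the failure with $2$ in place of $1$.
\item \emph{The bound becomes trivial after the repair.} Once $d(a,a)=0$, your bound on $d(P,Q)$ collapses to $0$. In Rosenfeld's setting the operations on $A$ are exact, so $P$ and $Q$ are the same element; the value $1-T(\cdots)$ only ever came from the nonzero diagonal. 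Separately, for non-idempotent $T$, monotonicity gives only $1-T(\tau,\tau)$ with $\tau=T(\mu(x_1),\dots,\mu(x_n))$, not $1-\tau$.
\end{itemize}
What does prove the forward half is this exactness. Give $A$ the discrete metric: every operation is then $1$-Lipschitz, and every identity holds with error $0\le 1-T(\cdots)$. Note that $\mu$ plays no role.

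\textbf{Non-converse.} Your witness is not a similarity algebra. The field of the practical example violates the uniform bounds on $\mathbb{R}$ (e.g.\ $d(x*_\varepsilon 1,x)=\varepsilon x^2$). Also, $+_\varepsilon$ is not globally Lipschitz, since $\partial_x(x+_\varepsilon y)=1+\varepsilon y$. Both failures occur for the usual metric and for the truncated one you switch to.

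More seriously, the diagonal argument proves too much. Since $s(a,a)=1$ holds for \emph{every} metric, it shows that no metric on a set with two points has the form $1-T(\mu(a),\mu(b))$. That refutes your unrepaired forward construction, not the converse. Against the repaired construction, which sets $s(a,a)=1$ by fiat, it says nothing.

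Argue off the diagonal with a genuine similarity algebra instead:
\begin{itemize}
\item $(\mathbb{R},+)$ with $d(a,b)=\min(|a-b|,1)$ and $\varepsilon=0$ qualifies; use $\min(s+t,1)\le\min(s,1)+\min(t,1)$ for the Lipschitz bound.
\item Suppose $d$ agreed off the diagonal with $1-T(\mu(a),\mu(b))$, or with its path metric. Then $d(a,b)\ge 1-\mu(a)$ for all $b\ne a$.
\item Letting $b\to a$ forces $\mu\equiv 1$, hence $d\equiv 0$ off the diagonal, contradicting $d(0,1)=1$.
\end{itemize}
If you adopt the discrete-metric representation, the simplest witness is any genuine similarity group with a non-associative law. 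For example, take $x*_\varepsilon y=x+y+\tfrac{\varepsilon}{2}\sin x$ on $\mathbb{R}$ with the usual metric. Its associativity error is $\tfrac{\varepsilon}{2}|\sin(x+y+\tfrac{\varepsilon}{2}\sin x)-\sin y|\le\varepsilon$, with identity $0$ and inverse $-x$. No Rosenfeld fuzzy group can carry this law, since the underlying operation of such a group is exactly associative.
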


Corollary \ref{corollary_similarity_generalizes_fuzzy} shows that similarity algebra generalizes fuzzy algebra.
The following theorem better explains this generalization. According to the following theorem (Theorem \ref{theorem_duality_membership_distance}), there is a duality of membership and distance; therefore, fuzzy algebra is similarity algebra constrained to one reference element.

\begin{theorem}[Duality of Membership and Distance]\label{theorem_duality_membership_distance}
Let $(A,s)$ be a similarity space with $d(a,b) := 1 - s(a,b)$. Assume there exists an element $e \in A$ such that:
\begin{align}\label{equation_mu_s}
\mu(a) := s(a,e).
\end{align}
Then:
\begin{enumerate}
\item $\mu$ defines a fuzzy set on $A$.
\item Any similarity algebra on $(A,s)$ induces a fuzzy algebra on $(A, \mu)$. In other words, fuzzy algebra is similarity algebra constrained to one reference element $e \in A$.
\item As $\varepsilon \rightarrow 0$, both structures collapse to the same classical algebra.
\end{enumerate}
\end{theorem}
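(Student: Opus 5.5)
The plan is to prove the three items in order, each directly from the definitions, using the similarity function $s = 1-d$ with values in $[0,1]$.

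\emph{Item (1).} Since $s(a,b)\in[0,1]$ for all $a,b$, the map $\mu(a) := s(a,e)$ defined in Eq. (\ref{equation_mu_s}) takes values in $[0,1]$. Hence it is a membership function, and $(A,\mu)$ is a fuzzy set. In addition, $\mu(e) = s(e,e) = 1 - d(e,e) = 1$, so the reference element has full membership.

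\emph{Item (2).} The key tool is a triangle-type inequality for $\mu$. By the triangle inequality for $d$,
\begin{align*}
1-\mu(u) = d(u,e) \leq d(u,v) + d(v,e),
\end{align*}
which gives $\mu(u) \geq \mu(v) - d(u,v)$. Equivalently, $\mu(u) \geq T_L(\mu(v), s(u,v))$, where $T_L(p,q)=\max(p+q-1,0)$ is the {\L}ukasiewicz t-norm.

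Apply this to each approximate axiom $d(P_\varepsilon(\b{x}),Q_\varepsilon(\b{x}))\le\varepsilon$. It yields $\mu(P_\varepsilon(\b{x})) \geq \mu(Q_\varepsilon(\b{x})) - \varepsilon$, and symmetrically. So the membership degrees of the two sides of each identity agree up to $\varepsilon$.

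For the Rosenfeld-type closure inequality $\mu(x*_\varepsilon y) \geq T(\mu(x),\mu(y))$, I would take $e = e_\varepsilon$ and combine three ingredients:
\begin{itemize}
\item the Lipschitz stability of $*_\varepsilon$, which gives $d(x*_\varepsilon y,\, e_\varepsilon *_\varepsilon e_\varepsilon) \leq L\big(d(x,e_\varepsilon)+d(y,e_\varepsilon)\big)$;
\item the approximate identity, which gives $d(e_\varepsilon *_\varepsilon e_\varepsilon, e_\varepsilon) \leq \varepsilon$;
\item the triangle inequality.
\end{itemize}
Together these produce $\mu(x *_\varepsilon y) \geq 1 - L(2-\mu(x)-\mu(y)) - \varepsilon$. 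This is a {\L}ukasiewicz-type t-norm bound, degraded by $L$ and $\varepsilon$.

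For inverses, the same argument with the inverse-stability constant $C$ gives $\mu(x_\varepsilon^{-1}) \geq 1 - C(1-\mu(x)) - C\,d(e_\varepsilon^{-1}, e_\varepsilon)$. Here $d(e_\varepsilon^{-1}, e_\varepsilon)$ is itself $O(\varepsilon)$, by combining the identity and inverse axioms with Lipschitz stability.

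\emph{Main obstacle.} Step (2) is the hardest. Exact Rosenfeld axioms with an arbitrary t-norm will not follow; they hold only in a relaxed form, with an $L$-dependent t-norm and additive $\varepsilon$ slack. I would therefore state the induced fuzzy algebra in that approximate form, interpreting ``fuzzy algebra'' in the $\varepsilon$-relaxed sense used in Theorem \ref{theorem_fuzzy_similarity_embedding}. I would also note that when $L\le 1$ the bound reduces to the standard $T_L$ form.

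\emph{Item (3).} Both structures are built on the same operations $+_\varepsilon$ and $*_\varepsilon$ on the same carrier $A$. The fuzzy structure only adds the membership $\mu$, which does not alter the operations. So by Theorem \ref{theorem_collapse} (via Lemma \ref{lemma_limit_approximate_equality}), under uniform convergence of the operations both collapse to the same classical algebra. In the limit, the relaxed inequalities of step (2) become the exact Rosenfeld inequalities for $\mu(a)=s(a,e)$, because the $\varepsilon$ terms vanish. Here I use $e_\varepsilon\to e$ and continuity of $s$, which follows from continuity of $d$.
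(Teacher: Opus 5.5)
Your item (1), and the part of item (2) showing that the two sides of each approximate identity have memberships within $\varepsilon$ of each other, coincide with the paper's Steps 1 and 3. The ``Lipschitz assumption'' the paper invokes is exactly your triangle-inequality estimate $\mu(u)\ge\mu(v)-d(u,v)$.

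The routes genuinely differ at closure and at collapse. For closure, the paper shows $\mu(a*b)\ge\mu(a\odot b)-\varepsilon$ by comparing with an exact operation $\odot$ assumed to satisfy $d(a*b,a\odot b)\le\varepsilon$. This works for any reference point $e$ and any similarity algebra. However, that quantitative closeness is not part of the structure definitions, which only ask $*_\varepsilon\to *$. Moreover, the inequality does not bound $\mu(a*b)$ in terms of $\mu(a),\mu(b)$, so it is not of Rosenfeld type. You instead take $e=e_\varepsilon$ and combine stability with the approximate identity to get $\mu(x*_\varepsilon y)\ge 1-L(2-\mu(x)-\mu(y))-\varepsilon$. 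This is a genuine closure condition derived from the stated axioms, at the price of needing monoid/group structure and the identity as reference point. Your caveat that min-type closure cannot hold in general is correct: in $(\mathbb{R},+)$ with $d(x,y)=\min(|x-y|,1)$, $e=0$, $x=y=0.4$, one gets $\mu(x+y)=0.2<0.6=\min(\mu(x),\mu(y))$.

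For item (3), the paper adds the crispness hypothesis $\mu\in\{0,1\}$ and argues that similarity becomes equality. You instead apply the collapse theorem to the shared operations and pass your inequalities to the limit via $e_\varepsilon\to e$ and the $1$-Lipschitz dependence of $s(a,\cdot)$. This needs no crispness assumption and is closer to the statement as written.

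Two small fixes. First, going through $e_\varepsilon^{-1}$, the inverse bound is $\mu(x_\varepsilon^{-1})\ge 1-C(1-\mu(x))-d(e_\varepsilon^{-1},e_\varepsilon)$, with no factor $C$ on the last term. Also, $d(e_\varepsilon^{-1},e_\varepsilon)\le 2\varepsilon$ follows from the right identity at $e_\varepsilon^{-1}$ and the left inverse at $e_\varepsilon$, without any Lipschitz argument. Second, $\max(0,1-L(2-p-q))$ is not a t-norm unless $L=1$. On a nontrivial $A$ the limit forces $L\ge 1$ and $C\ge 1$, since right multiplication by $e$ is the identity map and inversion is an involution. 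So the limiting inequalities in item (3) are the Lukasiewicz fuzzy-subgroup conditions only when $L=C=1$. You should say that the $\varepsilon$-slack disappears, rather than that exact Rosenfeld inequalities are obtained.
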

\begin{proof}
Step 1 --- $\mu$ is a valid fuzzy membership:

By definition of similarity:
\begin{align*}
s(a,e) \in [0,1].
\end{align*}
Thus:
\begin{align*}
\mu: A \rightarrow [0,1],
\end{align*}
is a well-defined membership function.

Step 2 --- Similarity induces fuzzy closure:

Let $*$ be a similarity operation satisfying:
\begin{align*}
d(a*b, a \odot b) \leq \varepsilon,
\end{align*}
where $\odot$ is the exact operation in the classical limit.
Then, by Lipschitz assumption, we have:
\begin{align*}
s(a*b, e) \geq s(a \odot b, e) - \varepsilon.
\end{align*}
Thus, according to Eq. (\ref{equation_mu_s}), we have:
\begin{align*}
\mu(a*b) \geq \mu(a \odot b) - \varepsilon.
\end{align*}
As $\varepsilon \rightarrow 0$, we have:
\begin{align*}
\mu(a*b) \geq \mu(a \odot b).
\end{align*}
This reproduces fuzzy closure.

Step 3 --- Associativity correspondence:

Similarity associativity:
\begin{align*}
d((a*b)*c, a*(b*c)) \leq \varepsilon.  
\end{align*}
Thus, according to Eq. (\ref{equation_d_s_fuzzy}), we have:
\begin{align*}
|s((a*b)*c,e) - s(a*(b*c),e)| \leq \varepsilon.
\end{align*}
Hence, according to Eq. (\ref{equation_mu_s}), we have:
\begin{align*}
|\mu((a*b)*c) - \mu(a*(b*c))| \leq \varepsilon.
\end{align*}
As $\varepsilon \rightarrow 0$, we have:
\begin{align*}
\mu((a*b)*c) = \mu(a*(b*c)),
\end{align*}
which is fuzzy associativity in the crisp limit.

Step 4 --- Collapse to classical algebra:

If additionally $\varepsilon \rightarrow 0$ and $\mu(a) \in \{0,1\}$, then:
\begin{itemize}
\item Similarity becomes equality:
\begin{align*}
d(x,y) = 0 \quad\iff\quad x = y.
\end{align*}
\item Membership becomes crisp.
\item All approximate axioms become exact.
\end{itemize}
Thus both fuzzy algebra and similarity algebra collapse to the same classical algebra.
\end{proof}

In conclusion, similarity algebra and fuzzy algebra are compared as follows:
\begin{itemize}
\item In fuzzy algebra, uncertainty lives on elements but in similarity algebra, uncertainty lives on relations. In other words, in fuzzy algebra, algebraic operations are exact and fuzziness affects element membership, whereas in similarity algebra, operations are exact maps but algebraic laws are relaxed into metric constraints.
\item Theorem \ref{theorem_fuzzy_similarity_embedding} and Corollary \ref{corollary_similarity_generalizes_fuzzy} show that similarity algebra generalizes fuzzy algebra. 
\item Theorem \ref{theorem_duality_membership_distance} demonstrates that fuzzy algebra is similarity algebra constrained to one reference element.
\end{itemize}

\section{Conclusion}\label{section_conclusion}

In this paper, we introduced \textit{Similarity Algebra}, a rigorous mathematical framework designed to bridge the gap between rigid classical algebraic theory and the approximate nature of practical computation. By relaxing the exact requirements of algebraic axioms into $\varepsilon$-bounded constraints within a metric space, we established a complete hierarchy of structures ranging from similarity semigroups and groups to similarity fields and vector spaces. Furthermore, we extended this framework into the realm of differential geometry by defining Similarity Lie Groups and Lie Algebras, providing a formal language for approximate continuous symmetries.

A primary contribution of this work is the derivation of the \textit{Collapse Theorems}. Through the limit principle of approximate equality, we proved that similarity algebra is not merely a heuristic relaxation but a consistent extension of classical mathematics: as the tolerance parameter $\varepsilon$ vanishes, every approximate structure converges uniformly to its classical counterpart. This includes the significant technical result that the tangent space of a similarity Lie group collapses precisely onto a classical Lie algebra. 

We further formalized these systems by establishing the \textit{Category of Similarity Algebra}, defining the necessary morphisms to study relationships between approximate structures. Moreover, our comparison with fuzzy algebra highlighted that while both fields address non-exactness, there are differences between similarity algebra and fuzzy algebra; specifically, similarity algebra generalizes fuzzy algebra. 

The utility of this framework spans multiple domains. In machine learning and data science, it provides a foundation for reasoning with kernel functions and noisy similarity measures. In network theory and functional analysis, it offers the flexibility needed to model systems where classical axioms are too restrictive. Future work may explore the integration of similarity algebra with stochastic processes to model probabilistic tolerances, as well as its application in the development of ``similarity-aware'' algorithms for high-dimensional data manifold learning \cite{ghojogh2023elements}.

\bibliographystyle{amsalpha}
\bibliography{refs}

\end{document}